\newtheorem{thm}{Theorem}
\newtheorem{cor}[thm]{Corollary}
\newtheorem{dfn}[thm]{Definition}
\newtheorem{lem}[thm]{Lemma}
\newtheorem{pro}[thm]{Proposition}
\def\ad{{\rm ad}}
\def\C{{\mathbb C}}
\def\Diff{{\rm Diff}}
\def\id{{\rm id}}
\def\Lie{{\mathcal L}}
\def\O{{\mathcal O}}
\def\R{{\mathbb R}}
\def\Z{{\mathbb Z}}
\title[Properly essential conformal groups]{On properly essential classical conformal diffeomorphism groups}
\author[S.~M\"uller \& P.~Spaeth]{Stefan M\"uller and Peter Spaeth}
\email{mueller@kias.re.kr \text{\it{and}} spaeth@kias.re.kr}
\address{Korea Institute for Advanced Study, Seoul 130-722, Republic of Korea}
\subjclass[2010]{53A30, 53D35}
\keywords{Essential and properly essential conformal group, classical conformal diffeomorphism group, cohomological equation, orbit of tensor field, contact structure, closed Reeb orbit, contact invariant, conformal symplectic structure, complete Liouville vector field, symplectic involution, Banyaga's conformal invariant, Gottschalk-Hedlund theorem}
\begin{document}
\thispagestyle{plain}

\begin{abstract}
We prove that various classical conformal diffeomorphism groups, which are known to be essential \cite{banyaga:ecg00}, are in fact properly essential.
This is a consequence of a local criterion on a conformal diffeomorphism in the form of a cohomological equation.
Furthermore, we study the orbit of a tensor field under the action of the conformal diffeomorphism group for these classical conformal structures.
On every closed contact manifold, we find conformal contact forms that are not diffeomorphic.
\end{abstract}

\maketitle
\section{Introduction and statement of main results}
Following A.~Banyaga~\cite{banyaga:ecg00}, we consider geometrical structures defined as (oriented) conformal classes $\sigma$ of some tensor field $\tau$ on a smooth manifold $M$, where two such tensor fields $\tau$ and $\tau'$ are considered equivalent if and only if $\tau' = e^f \tau$ for some smooth function $f$ on $M$.
We write $\sigma = [\tau]$ for the \emph{(oriented) conformal structure}, or $\tau \in \sigma$ if $\sigma = [\tau]$, and $(M,\sigma)$ for the \emph{(oriented) conformal manifold}.
The conformal structure $\sigma = [\tau]$ is an orientation of $M$ if $\tau$ is a volume form, a conformal symplectic structure if $\tau$ is a symplectic form, and a contact structure if $\tau$ is a contact form.
In any of these three cases, $\sigma$ is called an \emph{(oriented) classical conformal structure}.

For $\tau \in \sigma$ we denote by
	\[ C_{\ad,+}^\infty (M,\tau) = \{ f \in C^\infty (M) \mid e^f \tau \in \sigma \} \]
the set of \emph{admissible conformal rescaling factors} of the given tensor field $\tau$.
If $\tau' = e^g \tau$, there is a natural $1-1$ correspondence between $C_{\ad,+}^\infty (M,\tau)$ and $C_{\ad,+}^\infty (M,\tau')$ given by $f \mapsto f - g$.
If $\tau$ is a contact form or a volume form, then $C_{\ad,+}^\infty (M,\tau) = C^\infty (M)$, whereas if $\tau$ is a symplectic form on a manifold of dimension greater than two, then by the closedness condition, $C_{\ad,+}^\infty (M,\tau)$ consists of the constant functions only.

The group $\Diff (M)$ of diffeomorphisms of $M$ acts on conformal classes of tensor fields.
Indeed, if $\tau' = e^f \tau$, and $\varphi \in \Diff (M)$, then $\varphi^* \tau' = e^{f \circ \varphi} \varphi^* \tau$, and $\varphi^* \tau$ and $\varphi^* \tau'$ define the same (oriented) conformal structure.
We denote by
\begin{eqnarray*}
	\Diff_+ (M,\sigma) &=& \{ \varphi \in \Diff (M) \mid \varphi^* \sigma = \sigma \} \\
	&=& \{ \varphi \in \Diff (M) \mid \varphi^* \tau = e^f \tau \ {\rm for \ some} \ f \in C_{\ad,+}^\infty (M,\tau) \}
\end{eqnarray*}
the group of automorphisms (or \emph{conformal (diffeomorphism) group}) of the oriented conformal structure $\sigma = [\tau]$, and write
	\[ C_+^\infty (M,\tau) = \{ f \in C^\infty (M) \mid \exists \varphi \in \Diff_+ (M,\sigma) \ {\rm such \ that} \ \varphi^* \tau = e^f \tau \}, \]
for the set of \emph{conformal factors} of those diffeomorphisms that preserve the oriented conformal structure $\sigma = [\tau]$.
If $\tau' = e^g \tau$, there is a natural $1-1$ correspondence between $C_+^\infty (M,\tau)$ and $C_+^\infty (M,\tau')$ given by $f \mapsto f + g \circ \varphi - g$ provided $\varphi^* \tau = e^f \tau$.
If $\tau \in \sigma$, we also consider the group
	\[ \Diff (M,\tau) = \{ \varphi \in \Diff (M) \mid \varphi^* \tau = \tau \} \subset \Diff_+ (M,[\tau]) \]
of automorphisms of the tensor field $\tau$.

\begin{dfn}
The (oriented) conformal diffeomorphism group $\Diff_+ (M,\sigma)$ is called \emph{inessential} if there exists $\tau \in \sigma$ such that $\Diff (M,\tau) = \Diff_+ (M,\sigma)$, and \emph{essential} otherwise.
The group $\Diff_+ (M,\sigma)$ is called \emph{properly essential} if
\begin{equation} \label{eqn:essential}
	\bigcup_{\tau \in \sigma} \Diff (M,\tau) \subsetneq \Diff_+ (M,\sigma).
\end{equation}
\end{dfn}

In the article \cite{banyaga:ecg00}, Banyaga proved that many classical conformal structures carry essential conformal groups.
We show that their conformal groups are in fact properly essential.
More precisely, we prove the following theorem.

\begin{thm} \label{thm:essential}
The conformal diffeomorphism groups of the following conformal manifolds $(M,\sigma)$ are properly essential.

(i) $(M,\xi)$ a contact manifold,

(ii) $(M,[\omega])$ a conformal symplectic manifold that supports a complete Liouville vector field, and

(iii) $(M,\sigma)$ an oriented manifold.
\end{thm}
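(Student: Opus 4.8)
The plan is to convert membership in the union $\bigcup_{\tau\in\sigma}\Diff(M,\tau)$ into the solvability of a cohomological equation, and then, in each of the three cases, to exhibit a single conformal diffeomorphism for which this equation has no solution. First I would fix a reference tensor $\tau\in\sigma$ and write $\varphi^*\tau=e^f\tau$ for $\varphi\in\Diff_+(M,\sigma)$, so that $f$ is the conformal factor of $\varphi$. By the correspondence recorded just before the theorem, the conformal factor of $\varphi$ with respect to $\tau'=e^g\tau$ equals $f+g\circ\varphi-g$; hence $\varphi\in\Diff(M,\tau')$ precisely when
\[ f=g-g\circ\varphi, \]
and $\varphi$ lies in the union over $\tau\in\sigma$ exactly when this equation admits a solution $g\in C_{\ad,+}^\infty(M,\tau)$. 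Thus $\Diff_+(M,\sigma)$ is properly essential as soon as there is some $\varphi\in\Diff_+(M,\sigma)$ whose conformal factor is not an admissible coboundary $g-g\circ\varphi$.

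The obstruction I would exploit is dynamical. Iterating the cohomological equation along an orbit telescopes its right-hand side, so any solution forces $\int_M f\,d\nu=0$ for every $\varphi$-invariant probability measure $\nu$; in particular $\sum_{k=0}^{n-1}f(\varphi^k(p))=0$ at a periodic point $p$ of period $n$, and $f(p)=0$ at a fixed point. This is the necessity direction behind the Gottschalk--Hedlund theorem. Since in the contact and volume cases every smooth function is admissible, it then suffices to produce $\varphi\in\Diff_+(M,\sigma)$ with a fixed point $p$ at which $f(p)\neq0$: the equation fails already at $p$, so $\varphi$ preserves no $\tau'\in\sigma$.

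For case (iii) I would take, in a chart identified with a ball in $\R^n$, the time-one flow of a compactly supported radial vector field equal near the center to $\sum_i x_i\,\partial_{x_i}$; this is orientation-preserving, extends by the identity to $M$, fixes the center, and has conformal factor there equal to the divergence $n\neq0$ (Jacobian $e^n\neq1$). For case (i) I would work in a Darboux ball with $\alpha=dz-\sum_i y_i\,dx_i$ and Reeb field $R=\partial_z$, and take the time-one contactomorphism generated by the contact Hamiltonian $H=z\,\chi$ with $\chi\equiv1$ near the origin and compactly supported. A short computation shows the contact vector field $X_H$ vanishes at the origin, so the origin is fixed and the flow is compactly supported; since $\Lie_{X_H}\alpha=(R\cdot H)\,\alpha$, the conformal factor at the origin equals the Reeb derivative $(R\cdot H)(0)=\chi(0)=1\neq0$, and extension by the identity yields the desired $\varphi$. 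Case (ii) is structurally different because $C_{\ad,+}^\infty(M,\omega)$ consists only of constants (in dimension greater than two): the cohomological equation then reads $f=c-c=0$, so $\varphi$ lies in the union iff $\varphi^*\omega=\omega$, and it suffices to produce a conformal symplectomorphism with nonzero constant factor. The time-one map of the given complete Liouville flow satisfies $\varphi^*\omega=e\,\omega$ and does exactly this; the residual low-dimensional overlap with the volume case is handled by the same invariant-measure obstruction as above.

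The hard part will be the contact construction: one must check simultaneously that the cut-off Hamiltonian still has $X_H$ vanishing at the center, that the resulting time-one map is globally a compactly supported contactomorphism, and that the conformal factor at the fixed point is genuinely nonzero and unaffected by the cut-off. The remaining content is verifying that a single such $\varphi$ yields the strict inclusion \eqref{eqn:essential} defining proper essentiality, and keeping careful track of the admissible classes $C_{\ad,+}^\infty(M,\tau)$ --- trivially all of $C^\infty(M)$ in cases (i) and (iii), but decisively the constants in case (ii).
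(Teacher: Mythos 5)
Your proposal is correct and follows essentially the same route as the paper: reduce membership in $\bigcup_{\tau\in\sigma}\Diff(M,\tau)$ to solvability of the cohomological equation $f=g-g\circ\varphi$, observe that a fixed point with nonvanishing conformal factor obstructs any solution (the paper's Lemma~\ref{lem:criterion}), and then exhibit such a diffeomorphism via a cut-off contact Hamiltonian in a Darboux chart for (i), the time-one map of the complete Liouville flow together with the constancy of admissible rescalings for (ii), and a compactly supported radial vector field in a chart for (iii). The only cosmetic differences are your choice of contact Hamiltonian ($H=z$ versus the paper's $z-\sum x_iy_i$) and your framing of the obstruction via invariant measures rather than direct evaluation at the fixed point.
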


The same statement holds for the identity components of these conformal diffeomorphism groups.
We also discuss compact symplectic manifolds as well as more general noncompact symplectic manifolds in Section~\ref{sec:symplectic}.
The condition in part (ii) of the theorem means that $M$ admits a smooth vector field $X$ with $\Lie_X \omega = \omega$, that in addition has a well-defined flow everywhere on $M$ for all times.
This list includes for example $T^* N$, where $N$ is a smooth manifold, with its standard symplectic form, and in particular $\R^{2 n} = T^* \R^n$, as well as Stein manifolds, and the symplectizations of contact manifolds.

We also consider the orbit of a tensor field $\tau \in \sigma$ under the action of the conformal diffeomorphism group $\Diff_+ (M,\sigma)$, that is
	\[ \O (\tau) = \{ \tau' \mid \tau' = \varphi^* \tau \ {\rm for \ some} \ \varphi \in \Diff_+ (M,\sigma) \}. \]
Then $\O (\tau) \cong \Diff_+ (M,\sigma) / \Diff (M,\tau)$, and moreover, $\O (\tau)$ can be identified naturally with the subset $C_+^\infty (M,\tau) \subset C_{\ad,+}^\infty (M,\tau)$ via the relation $\tau' = e^f \tau$.

For noncompact manifolds $M$, we say that the orbit $\O (\tau)$ of $\tau$ is \emph{maximal} if $C_+^\infty (M,\tau) = C_{\ad,+}^\infty (M,\tau)$.
If $M$ is compact, and $\tau$ induces a volume form on $M$, every function $f \in C_+^\infty (M,\tau)$ satisfies an obvious average value constraint, which will be discussed in detail case by case below.
We say that $\O (\tau)$ is maximal if $C_+^\infty (M,\tau)$ consists of all smooth functions in $C_{\ad,+}^\infty (M,\tau)$ that satisfy this necessary constraint on the average value with respect to the induced volume form.
The notion of maximality depends only on the conformal class of $\tau$.
Indeed, suppose $\tau$ and $\tau' = e^g \tau \in \sigma$, and that $\O (\tau)$ is maximal.
Then there exists $\varphi \in \Diff_+ (M,\sigma)$ such that $\varphi^* \tau = \tau'$.
Moreover, if $f \in C_{\ad,+}^\infty (M,\tau')$, then $f + g \in C_{\ad,+}^\infty (M,\tau)$, and if $f$ satisfies the constraint on the average value with respect to $\tau'$, then $f + g$ satisfies the same constraint with respect to $\tau$.
By the maximality hypothesis, there exists $\psi \in \Diff_+ (M,\sigma)$ such that $\psi^* \tau = e^{f + g} \tau$.
Then $(\varphi^{-1} \circ \psi)^* \tau' = e^f \tau'$, proving $\O (\tau')$ is maximal.

If $\mu$ is a volume form on a closed manifold, it is well-known that $\O (\mu)$ is maximal by Moser's argument.
The same holds for open manifolds $M$ provided every vector field on $M$ is complete, due to the nondegeneracy of the volume form and vanishing of the top dimensional cohomology.
Moreover, Banyaga~\cite[Section 3.4]{banyaga:ecg00} proved that if $(M,\omega)$ is a symplectic manifold supporting a complete Liouville vector field, then the symplectic form $\omega$ has maximal orbit.
For $(M,\omega)$ a compact symplectic manifold, the notion of maximality is meaningless, since $C_+^\infty (M,\omega) = C_{\ad,+}^\infty (M,\omega) = \{ 0 \}$.
For closed contact manifolds, we will demonstrate the following theorem.
Under a certain additional hypothesis, the same statement holds for general contact manifolds.
See Section~\ref{sec:contact} for details.

\begin{thm} \label{thm:orbit}
For $(M,\xi)$ a closed contact manifold, and $\alpha$ any contact form, the orbit $\O (\alpha)$ is not maximal.
\end{thm}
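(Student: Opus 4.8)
The plan is to exhibit a smooth function $f$ satisfying the necessary average value constraint but not lying in $C_+^\infty (M,\alpha)$; since $\O(\alpha)$ is identified with $C_+^\infty(M,\alpha)$, this shows the orbit is not maximal. First I would pin down the constraint. Writing $\dim M = 2n+1$ and $\mu = \alpha\wedge(d\alpha)^n$, a short computation shows that $\varphi^*\alpha = e^f\alpha$ forces $\varphi^*\mu = e^{(n+1)f}\mu$, and since $\varphi\in\Diff_+(M,\sigma)$ preserves the induced orientation, integration over the closed manifold $M$ gives
\[ \int_M e^{(n+1)f}\,\alpha\wedge(d\alpha)^n = \int_M \alpha\wedge(d\alpha)^n. \]
This is the average value constraint in the contact case, and maximality of $\O(\alpha)$ would mean that \emph{every} $f\in C^\infty(M)$ satisfying it is realized as a conformal factor.

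The key is a dynamical invariant of the strict contact type. If $f\in C_+^\infty(M,\alpha)$, choose $\varphi$ with $\varphi^*\alpha = e^f\alpha$; then $\varphi$ is a strict contactomorphism from $(M,e^f\alpha)$ to $(M,\alpha)$, so it intertwines the Reeb vector fields, $\varphi_* R_{e^f\alpha} = R_\alpha$, and conjugates the two Reeb flows. In particular the two flows have identical sets of periods of closed orbits. I would also record the elementary fact that, because $R_\alpha$ is nowhere zero on the compact manifold $M$, there is a constant $\epsilon_0 > 0$ below which $\alpha$ has no closed Reeb orbit at all.

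It then suffices to produce a contact form $\beta = e^f\alpha$ for the same contact structure $\xi$ whose Reeb flow has a closed orbit of period less than $\epsilon_0$, with $f$ meeting the constraint above; such a $\beta$ cannot be conjugate to $\alpha$, whence $f\notin C_+^\infty(M,\alpha)$. To build $\beta$ I would: (i) choose an embedded circle $\gamma$ positively transverse to $\xi$, which exists on any closed contact manifold; (ii) invoke the normal form for a transverse circle to obtain a contact form for $\xi$, defined near $\gamma$, having $\gamma$ as a closed Reeb orbit, and extend its conformal factor arbitrarily to all of $M$ to get a global contact form $\beta_0 = e^{f_0}\alpha$ for $\xi$; (iii) rescale $\beta_0$ by a small constant on a neighborhood of $\gamma$, so that the Reeb direction and hence the orbit $\gamma$ are unchanged there while its period $\int_\gamma\beta$ drops below $\epsilon_0$; and (iv) adjust the conformal factor away from $\gamma$ so that $\int_M e^{(n+1)f}\alpha\wedge(d\alpha)^n = \int_M\alpha\wedge(d\alpha)^n$, which is possible because this integral depends continuously on $f$ and can be driven to any positive value by scaling $f$ on an open set disjoint from $\gamma$. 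The resulting global conformal factor $f$ then separates $e^f\alpha$ from $\alpha$ via the invariant of the previous paragraph.

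The main obstacle is step (ii): realizing a prescribed transverse circle as a closed Reeb orbit of a contact form for the \emph{fixed} contact structure $\xi$, with enough control to simultaneously govern its period in step (iii) and the total contact volume in step (iv). This rests on the contact neighborhood theorem for transverse circles, together with the observation that all contact forms for $\xi$ differ only by a conformal factor, so that the local prescription near $\gamma$ extends freely to a global contact form. Once this construction is in place, the conjugacy of Reeb flows forced by membership in $C_+^\infty(M,\alpha)$ is immediately violated, completing the proof.
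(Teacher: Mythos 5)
Your proposal is correct, and it reaches the conclusion by a genuinely different route from the paper, although both arguments pivot on the same two tools: the observation that $\varphi^*\alpha=\beta$ forces $\varphi_* R_\beta = R_\alpha$, so diffeomorphic contact forms have conjugate Reeb flows, and the contact neighborhood theorem near an embedded circle. The paper first produces (by an induction over open book decompositions) a contact form for $\xi$ with a closed Reeb orbit $\gamma$ with trivial normal bundle, standardizes near $\gamma$ so that a whole neighborhood is foliated by closed orbits of one period, and then plays this off against a \emph{generic} contact form, whose closed Reeb orbits of any fixed period are isolated; since $\alpha$ can be diffeomorphic to at most one of these two forms, the volume-normalized version of the other one witnesses non-maximality. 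You instead use the shortest period as the separating invariant: the elementary flow-box estimate that a nonvanishing vector field on a compact manifold has a uniform positive lower bound $\epsilon_0$ on periods of closed orbits, combined with a transverse circle made into a Reeb orbit by the neighborhood theorem and then shortened below $\epsilon_0$ by a constant conformal rescaling near $\gamma$. This buys you several simplifications: no open book induction (transverse circles exist in any Darboux chart, and a transverse circle is a one-dimensional contact submanifold with trivial conformal symplectic normal bundle, so the neighborhood theorem applies directly), no genericity statement to justify, and no case analysis on which of two forms $\alpha$ might be diffeomorphic to — your $\beta$ is separated from $\alpha$ unconditionally. The final volume adjustment is the same in both arguments, and your remark that it should be carried out away from $\gamma$ (so the short orbit survives) is exactly the care the paper also takes; just note that pushing the conformal factor down on the complement of a small tube around $\gamma$ lets the total contact volume range over an interval whose lower end can be made smaller than $\int_M \alpha\wedge(d\alpha)^n$, so the intermediate value theorem does apply. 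What the paper's heavier machinery buys in exchange is the closing Proposition for open manifolds admitting a contact form with a closed Reeb orbit with trivial normal bundle, where your $\epsilon_0$ bound is unavailable; for the closed case stated in the theorem, your argument is complete and arguably more economical.
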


Our proofs differ from the methods employed by Banyaga, who proved the essentiality of a conformal group to be equivalent to the non-vanishing of a certain conformal invariant.
This non-vanishing in turn is often a consequence of the non-triviality of $\O (\tau)$, together with transitivity properties of the group $\Diff_+ (M,\sigma)$.
In this work, we find local obstructions to equation (\ref{eqn:essential}) in order to prove our stronger result Theorem~\ref{thm:essential}.
In Section~\ref{sec:obstruction} we single out the obstruction for a conformal diffeomorphism to preserve any of the tensor fields in a conformal class.
We then discuss contact structures, conformal symplectic structures, and orientations in Sections~\ref{sec:contact}, \ref{sec:symplectic}, and \ref{sec:volume}, respectively.
The relation of Banyaga's conformal invariant to the property of being properly essential is discussed in Section~\ref{sec:banyaga-inv}.
In Section~\ref{sec:orientation} we consider conformal factors that can be any non-vanishing function, and allow the possibility of diffeomorphisms reversing the orientation of the conformal class $\sigma$, in the sense that $\varphi^*\tau = f \tau$ for some negative smooth function $f$.
The last section contains some final remarks and defines a new invariant of a contact diffeomorphism.
This paper originated with the problems solved in Section~\ref{sec:contact}.

\section{Diffeomorphisms preserving a conformal structure} \label{sec:obstruction}
Let $(M,\sigma)$ be a conformal manifold, and $\varphi$ a conformal diffeomorphism.
If $\tau \in \sigma$, then there exists a smooth function $f$ on $M$ such that $\varphi^* \tau = e^f \tau$.
Suppose $\tau' \in \sigma$ is any other tensor field in the conformal class of $\tau$, then $\tau' = e^g \tau$ for a smooth function $g$ on $M$.
The diffeomorphism $\varphi$ preserves $\tau'$, that is, $e^g \tau = \varphi^* (e^g \tau) = e^{g \circ \varphi + f} \tau$, if and only if
\begin{equation} \label{eqn:criterion}
	f = g - g \circ \varphi.
\end{equation}
That is, $\varphi$ preserves a tensor field in the conformal class $\sigma$ if and only if there exists a solution $g \in C_{\ad,+}^\infty (M,\tau)$ to the \emph{cohomological equation} (\ref{eqn:criterion}).
Thus we have the following lemma, which provides a local criterion for a conformal diffeomorphism to not preserve any tensor field in the given conformal class, and in particular, for the conformal group to be properly essential.

\begin{lem} \label{lem:criterion}
Let $\varphi \in \Diff_+ (M,\sigma)$ be a conformal diffeomorphism, $\tau \in \sigma$, and $\varphi^* \tau = e^f \tau$.
Suppose $\varphi$ has a fixed point at which $f$ does not vanish.
Then $\varphi \notin \Diff (M,\tau')$ for any $\tau' \in \sigma$, and $\Diff_+ (M,\sigma)$ is properly essential.
\end{lem}

\begin{proof}
If $x \in M$ is a fixed point of $\varphi$ with $f (x) \not= 0$, then $g (x) - g \circ \varphi (x) = 0$, and by equation (\ref{eqn:criterion}), $\varphi$ does not preserve any $\tau' \in \sigma$.
So $\varphi$ is contained in the right-hand side of equation (\ref{eqn:essential}), but not in the set on the left-hand side.
\end{proof}

If $\varphi$ is as in the lemma, its conformal factor with respect to $\tau' = e^g \tau \in \sigma$ equals $f + g \circ \varphi - g$.
In particular, at $x \in M$ this conformal factor coincides with $f (x) \not= 0$ for all $\tau' \in \sigma$.
Therefore, if $\sigma$ is the conformal class of a contact form, a symplectic form, or a volume form, then $\varphi$ does not preserve the volume form at the point $x$ induced by $\tau'$ for all $\tau' \in \sigma$.
On the other hand, every diffeomorphism in $\Diff (M,\tau')$ must preserve that volume form.
We obtain another criterion for a conformal group to be properly essential, which is more general (but less applicable) than the previous one if $\sigma$ is a classical conformal structure.

\begin{lem}
Let $\sigma$ be the conformal class of a contact form, a symplectic form, or a volume form, and $\varphi \in \Diff_+ (M,\sigma)$ a conformal diffeomorphism, such that for all $\tau \in \sigma$, $\varphi$ does not preserve the volume form induced by $\tau$.
Then $\Diff_+ (M,\sigma)$ is properly essential.
\end{lem}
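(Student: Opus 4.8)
The plan is to use the hypothesized diffeomorphism $\varphi$ itself as the witness establishing the strict inclusion (\ref{eqn:essential}). The key structural fact is that each tensor field $\tau \in \sigma$ canonically determines a volume form on $M$: a contact form $\alpha$ on a manifold of dimension $2n+1$ induces $\alpha \wedge (d\alpha)^n$, a symplectic form $\omega$ on a manifold of dimension $2n$ induces $\omega^n / n!$, and a volume form already is a volume form. Since in each case this quantity is built from $\tau$ by operations—exterior products and the exterior derivative—that commute with pullback by a diffeomorphism, the assignment $\tau \mapsto \mathrm{vol}_\tau$ is natural, in the sense that $\psi^* \mathrm{vol}_\tau = \mathrm{vol}_{\psi^* \tau}$ for every $\psi \in \Diff (M)$.

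First I would record the contrapositive consequence of this naturality. If $\varphi \in \Diff (M,\tau)$ for some $\tau \in \sigma$, that is $\varphi^* \tau = \tau$, then $\varphi^* \mathrm{vol}_\tau = \mathrm{vol}_{\varphi^* \tau} = \mathrm{vol}_\tau$, so $\varphi$ preserves the induced volume form. Contrapositively, the hypothesis that $\varphi$ fails to preserve $\mathrm{vol}_\tau$ for every $\tau \in \sigma$ forces $\varphi \notin \Diff (M,\tau)$ for every $\tau \in \sigma$, and hence $\varphi \notin \bigcup_{\tau \in \sigma} \Diff (M,\tau)$.

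To conclude, since $\varphi \in \Diff_+ (M,\sigma)$ by hypothesis while $\varphi$ lies outside the union, the left-hand side of (\ref{eqn:essential}) is a proper subset of the right-hand side, which is precisely the assertion that $\Diff_+ (M,\sigma)$ is properly essential.

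The only point requiring any care—the ostensible ``hard part''—is verifying the naturality relation $\psi^* \mathrm{vol}_\tau = \mathrm{vol}_{\psi^* \tau}$ in each of the three cases; but this is immediate from the compatibility of pullback with wedge products and with the exterior derivative, as in $\varphi^* (\alpha \wedge (d\alpha)^n) = (\varphi^* \alpha) \wedge (d (\varphi^* \alpha))^n$. In this sense the statement contains no genuine obstacle, and its role is to trade the pointwise fixed-point hypothesis of Lemma~\ref{lem:criterion} for the weaker, purely volume-theoretic hypothesis of failing to preserve any of the induced volume forms.
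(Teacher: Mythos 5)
Your proposal is correct and follows essentially the same reasoning the paper gives (in the discussion immediately preceding the lemma, which serves as its proof): any $\varphi$ preserving some $\tau' \in \sigma$ must preserve the induced volume form by naturality of pullback, so the hypothesis excludes $\varphi$ from $\bigcup_{\tau \in \sigma} \Diff (M,\tau)$ while it lies in $\Diff_+ (M,\sigma)$.
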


\section{Contact structures} \label{sec:contact}
Consider $\R^{2 n + 1}$ with coordinates $(x, y, z) = (x_1, \ldots, x_n, y_1, \ldots, y_n, z)$, and its standard contact structure $\xi = \ker \alpha$, where $\alpha$ is the standard contact form $dz - \sum_{i = 1}^n y_i dx_i$, so that the contact vector field of a smooth function $H$ on $\R^{2 n + 1}$ is given by
	\[ X_H = \sum_{i = 1}^n \left( - \frac{\partial H}{\partial y_i} \right) \frac{\partial}{\partial x_i} + \sum_{i = 1}^n \left( \frac{\partial H}{\partial x_i} + y_i \frac{\partial H}{\partial z} \right) \frac{\partial}{\partial y_i} + \left( H - \sum_{i = 1}^n y_i \frac{\partial H}{\partial y_i} \right) \frac{\partial}{\partial z} .\]
Let $H (x, y, z) = z - \sum_{i = 1}^n x_i y_i$.
Its contact flow is $\varphi_H^t (x, y, z) = (e^t x, y, e^t z)$, and $(\varphi_H^t)^* \alpha = e^t \alpha$.
The contact diffeomorphisms $\varphi_H^t$ all fix the points $(0, y, 0)$, and thus equation (\ref{eqn:criterion}) evaluated at any of the points $(0, y, 0)$ reads $t = 0$.
Thus for $t > 0$, there exists no solution $g$ to equation (\ref{eqn:criterion}), and consequently, the contact diffeomorphism $\varphi_H^t$ does not preserve any contact form that determines $\xi$.
Alternatively, it is apparent that for $t > 0$, $\varphi_H^t$ expands any volume form at the points $(0, y, 0)$.
This can also be seen directly; a rectangular solid centered there expands to a larger rectangular solid.
But a diffeomorphism preserving a contact form say $\alpha'$ also preserves the volume form $\alpha' \wedge (d\alpha')^n$ induced by it.
The same argument works for the Hamiltonian $F (x, y, z) = 2 z - \sum_{i = 1}^n x_i y_i$ with flow $\varphi_F^t (x, y, z) = (e^t x, e^t y, e^{2 t} z)$ and fixed point $(0, 0, 0)$.
Cutting off $H$ or $F$ in a neighborhood of a fixed point and using Darboux's theorem produces a diffeomorphism satisfying the hypothesis of Lemma~\ref{lem:criterion} on any contact manifold $(M^{2 n + 1},\xi)$.
That proves Theorem~\ref{thm:essential} (i).

Given two contact forms $\alpha$ and $\alpha' = e^f \alpha$, we consider the problem of the existence of a diffeomorphism $\varphi$ with $\varphi^* \alpha = \alpha'$.
If $M$ is compact, an obvious necessary condition on the function $f \in C^\infty (M)$ is given by the following lemma.
Denote by
\begin{equation} \label{eqn:normalized-volume}
	\mu = \frac{1}{\int_M \alpha \wedge (d\alpha)^n} \cdot \alpha \wedge (d\alpha)^n
\end{equation}
the normalized canonical volume form on $M$.

\begin{lem}
Let $(M^{2 n + 1},\xi)$ be a compact contact manifold, and $\alpha$ a contact form.
If $e^f \alpha$ is diffeomorphic to $\alpha$, i.e.\ there exists a contact diffeomorphism $\varphi$ with $\varphi^* \alpha = e^f \alpha$, then the $L^{n + 1}$-norm of $e^f$ is equal to $1$, or in other words, $e^{(n + 1) f}$ has average value $1$, with respect to the volume form $\mu$.
In particular, if the smooth function $f$ has positive average value with respect to $\mu$, then $e^f \alpha$ is not diffeomorphic to $\alpha$.
The same conclusion holds if $f \le 0$ everywhere, and $f$ is negative at some point.
\end{lem}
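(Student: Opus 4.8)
The plan is to track how the canonical volume form $\Omega = \alpha \wedge (d\alpha)^n$ transforms under the hypothesized contact diffeomorphism $\varphi$, and then integrate over $M$. Everything reduces to a single computation together with two elementary inequalities for the probability measure $\mu$.

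First I would compute $\varphi^* \Omega$. Since $\varphi^* \alpha = e^f \alpha$, taking the exterior derivative gives $\varphi^* (d\alpha) = d (e^f \alpha) = e^f (df \wedge \alpha + d\alpha)$. As $df \wedge \alpha$ is a $2$-form, the $n$-th power expands by the (commutative) binomial theorem, but every term containing $(df \wedge \alpha)^2$ vanishes because $\alpha \wedge \alpha = 0$; hence $(\varphi^* d\alpha)^n = e^{n f} \bigl( (d\alpha)^n + n \, df \wedge \alpha \wedge (d\alpha)^{n - 1} \bigr)$. Wedging with $\varphi^* \alpha = e^f \alpha$ annihilates the second term, since it then contains $\alpha$ twice, and leaves
	\[ \varphi^* \Omega = e^{(n + 1) f} \, \alpha \wedge (d\alpha)^n = e^{(n + 1) f} \, \Omega. \]

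Next I would integrate this identity over the closed manifold $M$. The positivity of the factor $e^{(n + 1) f}$ shows that $\varphi$ preserves the orientation determined by $\Omega$, so the change-of-variables formula yields $\int_M \varphi^* \Omega = \int_M \Omega$, whence $\int_M e^{(n + 1) f} \, \Omega = \int_M \Omega$. Dividing through by $\int_M \Omega = \int_M \alpha \wedge (d\alpha)^n$ gives $\int_M e^{(n + 1) f} \, \mu = 1$, which is exactly the statement that $e^{(n + 1) f}$ has average value $1$, equivalently that the $L^{n + 1}$-norm of $e^f$ equals $1$. This establishes the main assertion.

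The two consequences then follow by applying standard inequalities to the probability measure $\mu$. If $\int_M f \, \mu > 0$, the convexity of $t \mapsto e^{(n + 1) t}$ together with Jensen's inequality gives $\int_M e^{(n + 1) f} \, \mu \ge e^{(n + 1) \int_M f \, \mu} > 1$, contradicting the identity just derived, so no contact diffeomorphism $\varphi$ with $\varphi^* \alpha = e^f \alpha$ can exist. If instead $f \le 0$ everywhere and $f$ is negative at some point, then by continuity $e^{(n + 1) f} < 1$ on a nonempty open set and $e^{(n + 1) f} \le 1$ elsewhere, forcing $\int_M e^{(n + 1) f} \, \mu < 1$, again a contradiction. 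The only genuinely computational step is the transformation law for $\Omega$; once the cross terms are seen to vanish through the repetition of $\alpha$, the remaining arguments are routine, and I do not anticipate a serious obstacle.
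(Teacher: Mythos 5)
Your proof is correct and follows essentially the same route as the paper: the transformation law $\varphi^* (\alpha \wedge (d\alpha)^n) = e^{(n+1) f} \alpha \wedge (d\alpha)^n$, the change-of-variables formula, and Jensen's inequality for the positive-average case. The only divergence is in the final clause: where you argue directly that $f \le 0$ with $f$ negative somewhere forces $\int_M e^{(n+1) f} \mu < 1$ by pointwise monotonicity of the integrand, the paper instead applies the Jensen argument to $\varphi^{-1}$, whose conformal factor is $-f \circ \varphi^{-1}$, to conclude that $-f$ must also have nonpositive average. Your version is slightly more direct and avoids the inverse; both are valid, and nothing is missing.
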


\begin{proof}
The first statement follows from the change of variables formula applied to $\varphi^* (\alpha \wedge (d\alpha)^n) = e^{(n + 1) f} \alpha \wedge (d\alpha)^n$.
Jensen's inequality applied to the exponential function shows that $f$ has nonpositive average value.
On the other hand, $(\varphi^{-1})^* \alpha = e^{- f \circ \varphi^{-1}} \alpha$, so that $- f \circ \varphi^{-1}$ also has nonpositive average value, which is impossible if $f \le 0$ but not identically zero.
\end{proof}

In contrast to the situation for volume forms, this necessary condition is not sufficient.
The proof requires a series of lemmata.

\begin{lem} \label{lem:conjugate}
If two contact forms $\alpha$ and $\alpha'$ are diffeomorphic, then their Reeb flows $\varphi_R^t$ and $\varphi_{R'}^t$ are conjugate.
\end{lem}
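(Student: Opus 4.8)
The plan is to exploit the naturality of the Reeb vector field under pullback. Recall that the Reeb field $R$ of a contact form $\alpha$ is the unique vector field satisfying $\alpha(R) = 1$ and $\iota_R d\alpha = 0$, and similarly $R'$ is characterized by $\alpha'(R') = 1$ and $\iota_{R'} d\alpha' = 0$. Let $\varphi$ be the diffeomorphism with $\varphi^* \alpha = \alpha'$, so that also $d\alpha' = \varphi^* d\alpha$. The key claim is that the pullback vector field $\varphi^* R$, defined by $(\varphi^* R)_x = (d\varphi_x)^{-1} R_{\varphi(x)}$, coincides with $R'$.

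To verify this claim I would simply check the two defining equations. For the first, at a point $x$ one has $(\varphi^* \alpha)_x\bigl((\varphi^* R)_x\bigr) = \alpha_{\varphi(x)}\bigl(R_{\varphi(x)}\bigr) = 1$. For the second, for any tangent vector $v$ at $x$ one computes $(\varphi^* d\alpha)_x\bigl((\varphi^* R)_x, v\bigr) = d\alpha_{\varphi(x)}\bigl(R_{\varphi(x)}, d\varphi_x v\bigr) = 0$, since $\iota_R d\alpha = 0$. By uniqueness of the Reeb field, $\varphi^* R = R'$, equivalently $R$ and $R'$ are $\varphi$-related in the sense that $d\varphi \circ R' = R \circ \varphi$.

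Finally I would invoke the standard fact that $\varphi$-related vector fields have conjugate flows. Since $R'$ and $R$ are $\varphi$-related, their flows satisfy $\varphi_R^t \circ \varphi = \varphi \circ \varphi_{R'}^t$, that is, $\varphi_{R'}^t = \varphi^{-1} \circ \varphi_R^t \circ \varphi$ for all $t$, on the appropriate domains of definition, which is precisely the asserted conjugacy.

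The computations here are entirely routine naturality arguments, so I do not expect a genuine obstacle. The only points demanding care are the bookkeeping of the pullback conventions for vector fields as against differential forms, and---when $M$ is noncompact---the remark that conjugation by the fixed diffeomorphism $\varphi$ carries the (possibly incomplete) flow of $R'$ to that of $R$ with matching domains, so that the conjugacy relation remains meaningful without any completeness hypothesis.
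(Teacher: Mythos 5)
Your proposal is correct and is essentially the paper's own proof: the paper likewise observes that $\varphi^*\alpha = \alpha'$ forces $\varphi_* R' = R$ (by the characterizing equations of the Reeb field) and then concludes $\varphi \circ \varphi_{R'}^t \circ \varphi^{-1} = \varphi_R^t$. Your write-up simply spells out the pointwise verification and the domain bookkeeping that the paper leaves implicit.
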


\begin{proof}
If $\alpha' = \varphi^* \alpha$, then $\varphi_* R' = R$, and thus $\varphi \circ \varphi_{R'}^t \circ \varphi^{-1} = \varphi_R^t$.
\end{proof}

\begin{lem} \label{lem:closed-orbit}
Every closed contact manifold $(M,\xi)$ supports a contact form that has a closed Reeb orbit with trivial normal bundle.
\end{lem}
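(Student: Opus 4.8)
The plan is to reduce the statement to the \emph{existence of a single contact form whose Reeb field possesses a closed orbit}, and then to observe that for any such orbit the triviality of the normal bundle is automatic. First I would record this observation. Let $\alpha$ be any contact form with $\ker \alpha = \xi$, let $R$ be its Reeb field, and suppose $\gamma$ is a closed Reeb orbit. Along $\gamma$ one has the canonical splitting $TM|_\gamma = \xi|_\gamma \oplus \R R$, and since $\gamma$ is a Reeb orbit its tangent line is $T\gamma = \R R|_\gamma$. Hence the normal bundle of $\gamma$ is isomorphic to $\xi|_\gamma$. But $(\xi, d\alpha)$ is a symplectic vector bundle and therefore canonically oriented, so $\xi|_\gamma$ is an oriented vector bundle over the circle and thus trivial. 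Consequently \emph{every} closed Reeb orbit already has trivial normal bundle, and it suffices to produce one contact form defining $\xi$ that admits a closed Reeb orbit. I emphasize that this is far weaker than the Weinstein conjecture, which would assert that \emph{every} contact form has such an orbit; here I only need one well-chosen form.

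Next I would construct the orbit locally. In a Darboux chart for $\xi$, say with $\alpha_0 = dz - \sum_i y_i \, dx_i$, one can write down an embedded loop $\gamma$ that is everywhere transverse to $\xi$: a small circle in the $(x_1,y_1)$-plane with a suitable oscillation in the $z$-direction has $\alpha_0(\dot\gamma) > 0$ along $\gamma$. By the standard neighborhood theorem for transverse loops, a tubular neighborhood $N$ of $\gamma$ is contactomorphic to a neighborhood of $S^1 \times \{0\}$ in $(S^1 \times \R^{2n}, \ker \beta)$, where $\beta = d\theta + \tfrac12 \sum_i (x_i \, dy_i - y_i \, dx_i)$ and $S^1 \times \{0\}$ corresponds to $\gamma$. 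Since $d\beta = \sum_i dx_i \wedge dy_i$ contains no $d\theta$ term and $\beta(\partial_\theta) = 1$, the Reeb field of $\beta$ is $\partial_\theta$, so $S^1 \times \{0\}$, and hence $\gamma$, is a closed Reeb orbit of the transported model form.

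Finally I would globalize by rescaling. On $N$ the transported model form and $\alpha_0$ define the same hyperplane field $\xi$, so they differ by a positive factor, $\beta = h_0 \, \alpha_0$ on $N$. Choosing a smaller open neighborhood $N'$ of $\gamma$ with closure contained in $N$, and extending $h_0|_{N'}$ to a global positive function $h$ that equals $1$ off $N$, the form $\alpha := h \, \alpha_0$ defines $\xi$ on all of $M$ and coincides with $\beta$ on $N'$. Hence $d\alpha = d\beta$ on $N'$, so the Reeb fields agree there and $\gamma$ is a closed Reeb orbit of $\alpha$, whose normal bundle is trivial by the first paragraph.

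The only points requiring care are the invocation of the neighborhood theorem for transverse loops and the verification that the interpolation of $h$ stays positive, so that $\alpha$ remains a contact form defining $\xi$; both are routine. I expect the genuine conceptual content of the proof to lie entirely in the first paragraph, namely the identification of the normal bundle of a closed Reeb orbit with the symplectic, hence oriented and therefore trivial, bundle $\xi|_\gamma$.
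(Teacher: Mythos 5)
Your proof is correct, but it takes a genuinely different route from the paper's. The paper argues by induction on the dimension: it takes a contact form carried by an open book decomposition, applies the inductive hypothesis to the binding $K$, and uses the triviality of the normal bundle of $K$ in $M$ together with the contact neighborhood theorem to extend the resulting rescaling so that the closed Reeb orbit found in the binding survives as a closed Reeb orbit in $M$; the trivial-normal-bundle condition is carried along through the induction. You instead begin with the observation that this condition is automatic: for any closed Reeb orbit $\gamma$ one has $TM|_\gamma/T\gamma \cong \xi|_\gamma$, an orientable (because symplectic) vector bundle over a circle, hence trivial. That reduces the lemma to producing one contact form with some closed Reeb orbit, which you obtain from a transverse knot in a Darboux chart, the standard neighborhood theorem for transverse knots, and a conformal rescaling $h = \chi h_0 + (1-\chi)$ supported near the knot (a convex combination of positive functions, hence positive, so $h\alpha_0$ is indeed a contact form for $\xi$). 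Your route avoids the existence of supporting open books --- a deep theorem in dimensions above three --- and uses only Darboux's theorem and the tubular neighborhood theorem for contact submanifolds, so it is more elementary and self-contained; your opening observation would also streamline Lemma~\ref{lem:standard}, where the triviality of the normal bundle is invoked to extend a locally defined rescaling function. What the paper's induction buys is that the orbit is located inside the binding of an open book, but that extra structure is not needed for this lemma. The two points you flag as routine (existence of an embedded transverse loop, and positivity of the interpolation) are indeed routine, so I see no gap.
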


Since we have not been able to find a proof of this fact anywhere, we give a proof here.

\begin{proof}
We prove the lemma by induction on $n$, where $2 n + 1$ is the dimension of $M$.
The degenerate case $n = 0$ is trivial.
Choose a contact form $\alpha$ on $(M,\xi)$ that is carried by an open book decomposition with binding $K \subset M$.
By the induction hypothesis, the contact structure $\xi |_K$ on $K$ induced by the contact form $\alpha |_K$ supports a contact form that has a closed Reeb orbit with trivial normal bundle.
Thus there exists a smooth function $f$ on $K$ such that $e^f \alpha |_K$ has such a closed Reeb orbit $\gamma$.
Since the normal bundle of $K$ in $M$ is trivial, $f$ extends to a smooth function $g$ on $M$, and $e^g \alpha$ is a contact form on $M$ with kernel $\xi$ that restricts to $e^f \alpha |_K$ on $K$.
As in the proof of the contact neighborhood theorem, see for example \cite{geiges:ict08}, we can find a smooth function $h$ on $M$ that vanishes on $K$, such that the Reeb flow of $e^{g + h} \alpha$ restricts to the Reeb flow of $e^f \alpha |_K$ on $K$.
Then $\gamma$ is the desired closed Reeb orbit of the contact form $e^{g + h} \alpha$ on $M$.
\end{proof}

\begin{lem} \label{lem:standard}
Let $\alpha$ be a contact form and $\gamma$ be a closed Reeb orbit with trivial normal bundle.
Then there exists a contact form $\alpha'$ that induces the same contact structure as $\alpha$ and that is standard near $\gamma$.
In particular, all Reeb orbits of $\alpha'$ near $\gamma$ are closed and have the same period as $\gamma$.
\end{lem}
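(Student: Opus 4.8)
The plan is to obtain $\alpha'$ as the pullback of the standard contact form of a model neighborhood by a contactomorphism supplied by a neighborhood theorem. First observe that $\gamma$ is transverse to $\xi$: along a Reeb orbit $\dot\gamma$ is a positive multiple of the Reeb field $R$, and $\alpha(R) = 1 \neq 0$, so $\gamma$ is a closed transverse submanifold. Its normal data is the symplectic vector bundle $(\xi|_\gamma, d\alpha|_\xi)$ over $S^1$; because $\mathrm{Sp}(2n,\R)$ is connected, every symplectic vector bundle over $S^1$ is trivial, so this bundle is symplectically trivial. Together with the hypothesis that the normal bundle of $\gamma$ is trivial, this matches the normal data of the model below.

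As the model I would take $S^1 \times \R^{2n}$ with $\theta \in \R/L\Z$ and $\alpha_0 = d\theta + \sum_{i=1}^n (x_i\,dy_i - y_i\,dx_i)$. A direct computation gives $d\alpha_0 = 2\sum_i dx_i \wedge dy_i$ and Reeb field $R_0 = \partial_\theta$, so every circle $S^1 \times \{(x,y)\}$ is a closed Reeb orbit of period $L$.

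Next I would run the Moser-type neighborhood theorem for transverse submanifolds, by the same homotopy method used to prove the contact neighborhood theorem in \cite{geiges:ict08}, to produce a contactomorphism $\Psi$ from a neighborhood of $\gamma$ onto a neighborhood of $S^1 \times \{0\}$ that carries $\gamma$ to $S^1 \times \{0\}$ and satisfies $\Psi_* \xi = \ker \alpha_0$; the triviality established above is exactly the input this argument requires. Since $\Psi^* \alpha_0$ and $\alpha$ then have the same kernel $\xi$ near $\gamma$, I may write $\Psi^* \alpha_0 = e^h \alpha$ for a smooth function $h$ defined near $\gamma$, with the co-orientation chosen so that the factor is positive. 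Extending $h$ arbitrarily to a smooth function on $M$ and setting $\alpha' = e^h \alpha$ yields a global contact form with $\ker \alpha' = \xi$ that, in the coordinates provided by $\Psi$, agrees with $\alpha_0$; thus $\alpha'$ is standard near $\gamma$. By naturality of the Reeb field under the identification $\alpha' = \Psi^* \alpha_0$, the Reeb flow of $\alpha'$ near $\gamma$ is conjugate via $\Psi$ to that of $\alpha_0$, so all nearby Reeb orbits are closed of the common period $L$, which is the period of $\gamma$ itself as a Reeb orbit of $\alpha'$.

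The hard part will be the neighborhood-theorem step: identifying the first-order normal form of $\alpha$ along $\gamma$ and running the Moser homotopy while holding the normal-bundle data fixed. The decisive simplification is that symplectic vector bundles over $S^1$ are automatically trivial, so no framing obstruction arises and the model is forced to be the standard one; the remaining points, namely the choice of $L$, fixing the co-orientation so that $e^h$ is positive, and the extension of $h$ to $M$, are routine.
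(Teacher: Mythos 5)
Your proposal is correct and follows essentially the same route as the paper: both invoke the contact neighborhood theorem to identify a neighborhood of $\gamma$ contactomorphically with a standard model whose Reeb field is $\partial_\theta$, then write the pulled-back standard form as $e^h\alpha$ near $\gamma$ and extend $h$ to all of $M$. Your explicit remark that the symplectic normal bundle $(\xi|_\gamma, d\alpha)$ is trivial because $\mathrm{Sp}(2n,\R)$ is connected is a welcome clarification of the role the triviality hypothesis plays in the neighborhood theorem, but the argument is the paper's argument.
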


\begin{proof}
On the normal bundle of $\gamma$, consider the contact structure $\xi_0$ induced by the standard contact form $\alpha_0 = d\theta - \sum_i y_i dx_i$, where the coordinate $\theta$ parameterizes $\gamma$, and $x_i$ and $y_i$ are normal coordinates.
By the contact neighborhood theorem, $\xi_0$ is diffeomorphic near $\gamma$ to the contact structure induced by $\alpha$.
Therefore there exists a smooth function $f$, locally defined in a neighborhood of $\gamma$, so that $e^f \alpha$ is locally diffeomorphic to $\alpha_0$.
Since $\gamma$ has trivial normal bundle, $f$ extends to a globally defined smooth function $g$, and the contact form $\alpha' = e^g \alpha$ is locally diffeomorphic to $\alpha_0$ near $\gamma$.
\end{proof}

\begin{lem}
Every closed contact manifold $(M^{2 n + 1},\xi)$ with contact form $\alpha$ supports another contact form not diffeomorphic to $\alpha$.
If $M$ is compact, this contact form can be chosen so that its induced volume form has the same total volume as the volume form induced by $\alpha$.
\end{lem}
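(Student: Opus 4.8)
The plan is to detect non-diffeomorphism through a measure-theoretic invariant of the closed Reeb orbits, exploiting the rigidity behind Lemma~\ref{lem:conjugate}. By Lemmas~\ref{lem:closed-orbit} and~\ref{lem:standard}, I first replace $\alpha$ by a contact form $\beta = e^{g}\alpha$ with the same kernel $\xi$ that is standard near a closed Reeb orbit $\gamma$. In the standard chart the contact form is $\alpha_0 = d\theta - \sum_i y_i\,dx_i$, whose Reeb field is $\partial_\theta$, so a neighborhood $V$ of $\gamma$ is foliated by the $\theta$-circles, all closed Reeb orbits of one common minimal period $T$. Crucially $V$ is invariant under the Reeb flow, so the dynamics inside $V$ is decoupled from the rest of $M$. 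If $M$ is compact I also rescale $\beta$ by a positive constant so that $\int_M \beta\wedge(d\beta)^n = \int_M \alpha\wedge(d\alpha)^n$.

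For a contact form $\beta$ and a number $T > 0$, let $P_T(\beta)$ be the (measurable) set of points lying on a closed Reeb orbit of minimal period $T$, and put $I_T(\beta) = \int_{P_T(\beta)} \beta\wedge(d\beta)^n$. The first key step is that $I_T$ is an invariant of the diffeomorphism type of a contact form. Indeed, if $\varphi^{*}\beta_1 = \beta_2$, then $\varphi_{*}R_2 = R_1$ as in the proof of Lemma~\ref{lem:conjugate}, so $\varphi$ conjugates the two Reeb flows and sends closed orbits to closed orbits of equal minimal period, giving $\varphi(P_T(\beta_2)) = P_T(\beta_1)$. Since in addition $\varphi^{*}\bigl(\beta_1\wedge(d\beta_1)^n\bigr) = \beta_2\wedge(d\beta_2)^n$, the map $\varphi$ pulls the induced volume form back to the induced volume form, and therefore $I_T(\beta_2) = I_T(\beta_1)$ for every $T$. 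This quantitative equality, rather than mere topological conjugacy of flows, is what makes the invariant usable.

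Next I perturb $\beta$ only inside $V$. I choose $\beta' = e^{h}\beta$ with $h$ a function of the normal coordinates alone, compactly supported in $V$, and (in the compact case) subject to $\int_M (e^{(n+1)h} - 1)\,\beta\wedge(d\beta)^n = 0$, so that $\beta'$ has the same total volume as $\beta$, hence as $\alpha$. Because $\beta' = \beta$ outside $\mathrm{supp}\,h \subset V$ and $V$ stays invariant for both Reeb flows, the flows and the induced volume forms agree on $M\setminus\mathrm{supp}\,h$; consequently the contributions of this region to $I_T(\beta)$ and to $I_T(\beta')$ are identical. Inside $\mathrm{supp}\,h$ the situation changes: computing the Reeb field of $e^{h}\beta$ in the standard chart shows the orbits pick up a drift in the normal directions governed by the Hamiltonian flow of $h$, whose rotation number varies from level set to level set, so that for a suitable $h$ the closed Reeb orbits of $\beta'$ meeting $\mathrm{supp}\,h$ form a set of measure zero. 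Hence $I_T(\beta') = I_T(\beta) - \int_{\mathrm{supp}\,h}\beta\wedge(d\beta)^n < I_T(\beta)$, and so $\beta$ and $\beta'$ are not diffeomorphic.

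Finally, both $\beta$ and $\beta'$ are contact forms with kernel $\xi$ and, in the compact case, the same total volume as $\alpha$. If both were diffeomorphic to $\alpha$ they would be diffeomorphic to one another, contradicting $I_T(\beta)\neq I_T(\beta')$; thus at least one of them is the desired contact form not diffeomorphic to $\alpha$. I expect the perturbation to be the main obstacle: one must exhibit a concrete $h$ for which the closed orbits of $\beta'$ inside its support are negligible, i.e.\ genuinely control the perturbed Reeb dynamics in the standard chart, while at the same time meeting the average-value constraint that preserves the total volume. The remaining steps are either soft (the invariance of $I_T$) or already supplied by Lemmas~\ref{lem:closed-orbit} and~\ref{lem:standard}.
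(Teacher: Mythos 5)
Your strategy is sound and genuinely different from the paper's, but it has a gap exactly where you flag one: the existence of a compactly supported $h$ for which the closed Reeb orbits of $e^{h}\beta$ of minimal period $T$ meeting $\operatorname{supp} h$ form a null set is asserted, not proved, and it is the crux of the whole argument --- without it the inequality $I_T(\beta') < I_T(\beta)$ has no content. The claim is true and provable: computing the Reeb field of $e^{h}\alpha_0$ with $h = h(x,y)$ gives normal component $e^{-h}\bigl(\sum_i h_{y_i}\partial_{x_i} - h_{x_i}\partial_{y_i}\bigr)$ and $\theta$-component $e^{-h}\bigl(1 + \sum_i y_i h_{y_i}\bigr)$, so for a radial $h = \chi\bigl(\sum_i (x_i^2+y_i^2)\bigr)$ the orbits lie on invariant tori; the $T$-periodic set is then confined to the level sets where a certain return quantity is rational (a null set for nonconstant $\chi$) together with the critical set of $\chi$, where the $\theta$-circles have period $Te^{\chi}\neq T$ unless $\chi=0$. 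You should carry this out, and also address the interaction with your volume normalization: requiring $\int_M (e^{(n+1)h}-1)\,\beta\wedge(d\beta)^n = 0$ forces $h$ to change sign, so you must check that such an $h$ can still be chosen in the "orbit-destroying" class (it can, but it is an additional constraint you impose and never revisit). A minor point to state explicitly is the measurability of $P_T(\beta)$ (it is Borel: the fixed-point set of $\varphi_R^T$ minus a countable union of fixed-point sets of $\varphi_R^{T/k}$).

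For comparison, the paper avoids the explicit perturbation entirely: it invokes the fact that a \emph{generic} contact form has isolated closed Reeb orbits of any fixed period, so that the form $\alpha'$ of Lemma~\ref{lem:standard}, which carries an open set of $T$-periodic orbits, cannot be conjugate to a generic one; $\alpha$ then fails to be diffeomorphic to at least one of the two, and the volume is corrected either by a bump function supported away from the periodic neighborhood or by an additive constant (which preserves genericity). Your route trades that genericity input for a concrete computation and yields a sharper, quantitative invariant $I_T$ (the paper distinguishes "open set of $T$-periodic points" from "isolated $T$-periodic orbits", you distinguish two positive volumes), which is appealing; but as written the decisive dynamical step is still an IOU.
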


\begin{proof}
Let $\alpha'$ be a contact form for $\xi$ as in the previous lemma.
For a generic contact form $\alpha''$, its closed Reeb orbits of any fixed period are isolated as smooth maps $S^1 \to M$.
Thus by Lemma~\ref{lem:conjugate}, $\alpha''$ cannot be diffeomorphic to $\alpha'$.
The contact form $\alpha$ on $(M,\xi)$ may be diffeomorphic to one of the above contact forms $\alpha'$ or $\alpha''$, but not both.
If $M$ is compact, we can modify $\alpha'$ and $\alpha''$ to induce volume forms of total volume equal to the total volume of the volume form induced by $\alpha$.
If $\alpha$ is generic in the above sense, write $\alpha' = e^f \alpha$, and choose a nonempty open subset $U$ of $M$ that does not intersect the neighborhood of $\gamma$ filled out by closed Reeb orbits of $\alpha'$, and a function $g$ compactly supported inside $U$, such that $e^{(n +1) (f + g)}$ has average value $1$ with respect to the volume form $\mu$ in equation (\ref{eqn:normalized-volume}) induced by $\alpha$.
But $e^{(n +1) (f + g)} \alpha \wedge (d\alpha)^n$ is the volume form induced by the contact form $e^{f + g} \alpha$, and the Reeb flow of the latter coincides with the Reeb flow of $e^f \alpha$ in a neighborhood of $\gamma$.
Thus $\alpha$ and $e^{f + g} \alpha$ are not diffeomorphic.
Otherwise, observe that if $\alpha'' = e^h \alpha$ is generic, then so is $e^{h + c} \alpha$ for any real constant $c$.
This constant can be chosen so that $e^{(n + 1) (h + c)}$ has the correct average value.
\end{proof}

Thus the orbit of any contact form $\alpha$ on a closed contact manifold is not maximal, and the proof of Theorem~\ref{thm:orbit} is complete.
It would be interesting to have a complete understanding of the orbit $\O (\alpha)$ of a contact form $\alpha$.
This problem seems to be very difficult.

In contrast, by Darboux's theorem, locally near a point $x$ all contact forms on a contact manifold $M$ are diffeomorphic.
These local diffeomorphisms can be constructed using Moser's argument so that they coincide with the identity map near the boundary of a larger local chart, and thus extend to global diffeomorphisms of $M$ that exchange the two contact forms near $x \in M$.

Note however that these Darboux neighborhoods do not contain any closed Reeb orbits, since a contact form on an open contact manifold may not admit any, as is the case for the standard contact form on $\R^{2 n + 1}$.
In general one may have to consider other invariants of the contact form in order to distinguish contact forms in the same conformal class on open manifolds.
However, the preceding lemmata also prove the following result.

\begin{pro}
If $(M,\xi)$ admits a contact form having a closed Reeb orbit with trivial normal bundle, and $\alpha$ is any contact form, then the orbit $\O (\alpha)$ is not maximal.
\end{pro}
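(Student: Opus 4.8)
The plan is to reprise the argument of the preceding lemmata, replacing the appeal to Lemma~\ref{lem:closed-orbit} by the hypothesis of the proposition. If $M$ is compact the statement is already Theorem~\ref{thm:orbit}, so I assume $M$ is open; then $C_{\ad,+}^\infty(M,\alpha) = C^\infty(M)$ carries no average-value constraint, and maximality of $\O(\alpha)$ would mean that $e^f\alpha$ is diffeomorphic to $\alpha$ for every $f \in C^\infty(M)$. It therefore suffices to exhibit a single contact form in the conformal class of $\alpha$ that is not diffeomorphic to $\alpha$.

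By assumption there is a contact form $\beta$ for $\xi$ with a closed Reeb orbit $\gamma$ having trivial normal bundle. Applying Lemma~\ref{lem:standard} to $\beta$ and $\gamma$ produces a contact form $\alpha'$ in the same conformal class that is standard near $\gamma$; in particular a whole neighborhood of $\gamma$ is filled by closed Reeb orbits of $\alpha'$, all of one period $T$, so the period-$T$ orbits of $\alpha'$ are far from isolated. On the other hand, as recorded above, a generic contact form $\alpha''$ for $\xi$ has, for each fixed period, only isolated closed Reeb orbits as smooth maps $S^1 \to M$ (and on an open manifold possibly none at all). The Reeb flows of $\alpha'$ and $\alpha''$ are therefore not conjugate, and Lemma~\ref{lem:conjugate} shows $\alpha'$ and $\alpha''$ are not diffeomorphic.

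Finally, $\alpha$, $\alpha'$, $\alpha''$ all determine $\xi$, so $\alpha' = e^{f'}\alpha$ and $\alpha'' = e^{f''}\alpha$ for some $f', f'' \in C^\infty(M)$. If $\alpha$ were diffeomorphic to both $\alpha'$ and $\alpha''$, then $\alpha'$ and $\alpha''$ would be diffeomorphic to each other, contradicting the previous paragraph. Hence at least one of $e^{f'}\alpha$, $e^{f''}\alpha$ fails to be diffeomorphic to $\alpha$, i.e.\ one of $f', f''$ lies in $C_{\ad,+}^\infty(M,\alpha) \setminus C_+^\infty(M,\alpha)$, proving $\O(\alpha)$ is not maximal.

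The step I expect to be the main obstacle is the genericity input: that a generic contact form in a fixed conformal class has, period by period, only isolated closed Reeb orbits. This is a transversality statement for the time-$T$ Reeb return map, and on an open manifold one must ensure both that the needed perturbations stay within the conformal class and that \emph{generic} is interpreted in a suitable Baire sense over an exhaustion by compact sets. Since the obstruction we exploit is entirely local near $\gamma$ and only requires the existence of one comparison form whose period-$T$ orbits are isolated, I would import this genericity exactly as in the closed case rather than reprove it here.
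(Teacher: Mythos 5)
Your proposal is correct and follows essentially the same route the paper intends: the paper explicitly derives this proposition from ``the preceding lemmata,'' i.e.\ by substituting the hypothesis for Lemma~\ref{lem:closed-orbit} and then running the dichotomy between the standardized form $\alpha'$ (with a neighborhood of closed Reeb orbits of one period) and a generic form $\alpha''$ (isolated closed orbits), exactly as you do. Your explicit handling of the open case (no average-value constraint) and your caveat about the genericity input match the paper's implicit treatment.
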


\section{Conformal symplectic structures} \label{sec:symplectic}
Let $\omega$ be a symplectic form on a smooth manifold $M$ of necessarily even dimension $2 n$, and $\sigma$ its (oriented) conformal class.
If $n = 1$, $\omega$ is just an area form, and $\sigma$ the induced orientation on the surface $M$.
We assume henceforth that the dimension $2 n \ge 4$, and postpone the dimension two case to the next section.
Due to the closedness assumption, any other symplectic structure in the same conformal class is of the form $e^\lambda \omega$ for a real constant $\lambda$.
In other words, $C_{\ad,+}^\infty (M,\omega') = \R$ for any $\omega' \in [\omega]$.

Suppose first that $M$ is compact.
Let $\varphi \in \Diff_+ (M,\sigma)$ be a conformal symplectic diffeomorphism, and $\varphi^* \omega = e^\mu \omega$ for a real constant $\mu$.
By the change of variables formula, $n \mu = 0$, and thus $\mu = 0$.
That means every conformal diffeomorphism of $\sigma$ preserves $\omega$, and in fact, any symplectic form $e^\lambda \omega \in \sigma$.
Thus if $M$ is compact, $\Diff_+ (M,\sigma)$ is trivially inessential.

Now suppose that $M$ is noncompact.
Since all the functions $f$ on the left-hand side of equation (\ref{eqn:criterion}) are constant, it suffices to find one conformal diffeomorphism $\varphi$ with $\mu \not= 0$.

\begin{lem} \label{lem:expand}
If there exists a conformal diffeomorphism $\varphi \in \Diff_+ (M,\sigma)$ with $\varphi^* \omega = e^\mu \omega$ and $\mu \not= 0$, then $\Diff_+ (M,\sigma)$ is properly essential, and $\Diff_+ (M,\sigma)$ is inessential otherwise.
In particular, if $\Diff_+ (M,\sigma)$ is essential it is properly essential.
\end{lem}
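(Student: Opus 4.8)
The plan is to exploit the observation made just above the statement: because $M$ has dimension $2n \ge 4$ and $\omega$ is closed, the admissible conformal rescaling factors are only the constants, $C_{\ad,+}^\infty (M,\omega) = \R$. This collapses the cohomological equation (\ref{eqn:criterion}) to a statement about a single real number, and reduces the entire dichotomy to whether the conformal factor $\mu$ of some conformal diffeomorphism can be made nonzero.

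First I would settle the properly essential case. Assume there is a $\varphi \in \Diff_+ (M,\sigma)$ with $\varphi^* \omega = e^\mu \omega$ and $\mu \ne 0$. Any tensor field in $\sigma$ has the form $\omega' = e^\lambda \omega$ for a constant $\lambda \in \R$, and then $\varphi^* \omega' = e^{\lambda + \mu} \omega = e^\mu \omega' \ne \omega'$, so $\varphi$ preserves no $\omega' \in \sigma$. Equivalently, in equation (\ref{eqn:criterion}) the candidate solution $g$ must lie in $C_{\ad,+}^\infty (M,\omega) = \R$, whence $g \circ \varphi = g$ and the equation degenerates to the unsolvable $\mu = 0$. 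Thus $\varphi$ belongs to the right-hand side of (\ref{eqn:essential}) but to none of the groups $\Diff (M,\omega')$ on the left, so $\Diff_+ (M,\sigma)$ is properly essential. This is the spirit of Lemma~\ref{lem:criterion}, except that the forced constancy of $g$ makes the obstruction global and dispenses with the fixed-point hypothesis.

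Next I would treat the remaining alternative. If every $\varphi \in \Diff_+ (M,\sigma)$ has $\mu = 0$, then $\varphi^* \omega = \omega$ for all such $\varphi$, giving $\Diff_+ (M,\sigma) \subseteq \Diff (M,\omega)$; as the reverse inclusion always holds, $\Diff (M,\omega) = \Diff_+ (M,\sigma)$ and the group is inessential with witness $\tau = \omega$. The final ``in particular'' clause is then purely formal: the two cases just analyzed are exhaustive and mutually exclusive, so if $\Diff_+ (M,\sigma)$ fails to be inessential --- that is, is essential --- some $\varphi$ must have $\mu \ne 0$, which by the previous paragraph forces properly essential. I do not expect a genuine obstacle here; the one point to keep straight is the logical shape of the definitions, namely that ``inessential'' demands a single $\tau$ fixed by the whole group, so that the trichotomy collapses, in this constant-factor regime, to the single question of the vanishing of $\mu$.
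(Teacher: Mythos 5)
Your proposal is correct and follows exactly the route the paper takes: the paper leaves this lemma without a formal proof precisely because, as noted just before its statement, $C_{\ad,+}^\infty(M,\omega)=\R$ in dimension $2n\ge 4$ forces the solution $g$ of equation (\ref{eqn:criterion}) to be constant, so the equation degenerates to $\mu=0$ and the whole dichotomy reduces to whether some conformal factor is nonzero. Your write-up simply makes this explicit, including the easy converse that $\mu\equiv 0$ gives $\Diff_+(M,\sigma)=\Diff(M,\omega)$.
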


Suppose that $M$ supports a complete Liouville vector field $X$, i.e.\ $\Lie_X \omega = \omega$.
Note that then $M$ is necessarily noncompact.
By our assumption, $X$ integrates to a globally defined flow $\varphi_t$, and the identity $\varphi_t^* \omega = e^t \omega$ holds, or $\varphi_t$ has conformal factor $t \in \R$.

Let $M = T^*N$ with coordinates $q \in M$ and $p$ in the fiber direction, and its standard symplectic form $dp \wedge dq = \sum_i dp_i \wedge dq_i$.
Then the smooth vector field $p \, \partial / \partial p$ is a complete Liouville vector field.
This includes the case of $\R^{2 n} = T^* \R^n$ with its standard symplectic structure.
If $M$ is a Stein manifold, it also admits a globally defined complete Liouville vector field \cite{banyaga:ecg00}.
For $M \times \R$ the symplectization of a contact manifold $M$ with symplectic form $- d (e^\theta \alpha)$, where $\alpha$ is a contact form on $M$ and $\theta$ the variable in the $\R$ direction, the vector field $\partial / \partial \theta$ is complete Liouville.
Theorem~\ref{thm:essential} (ii) is proved.

Since the conformal factor $\mu$ is globally constant, diffeomorphisms as in Lemma~\ref{lem:expand} can of course not be constructed locally as in the case of contact structures in the previous section.
Global constructions are most easily achieved as time-one maps of complete vector fields, and this is precisely the situation of Theorem~\ref{thm:essential} (ii).
As a partial converse, we have the following lemma.
Recall that by Lemma~\ref{lem:expand}, the hypothesis implies that $\Diff_+ (M,\sigma)$ is properly essential.

\begin{lem}
Suppose there exists $\varphi \in \Diff_0 (M,\sigma) \subset \Diff_+ (M,\sigma)$, such that $\varphi_* \omega = e^\mu \omega$ for a (and thus any) $\omega \in \sigma$, with nonzero conformal factor $\mu \in \R$.
Then $(M,\omega)$ supports a Liouville vector field, and in particular, $\omega$ is exact.
\end{lem}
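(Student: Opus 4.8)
The plan is to exploit the fact that $\varphi$ lies in the identity component $\Diff_0 (M,\sigma)$ in order to realize it as the time-one map of a smooth isotopy through conformal diffeomorphisms, and then to integrate the resulting infinitesimal scaling relation in $t$ so as to manufacture a global primitive of $\omega$. The averaging over the time parameter is exactly the device that upgrades a merely infinitesimal conformal rescaling into a genuine Liouville vector field.

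First I would choose a smooth isotopy $\{ \varphi_t \}_{t \in [0,1]}$ in $\Diff_+ (M,\sigma)$ with $\varphi_0 = \id$ and $\varphi_1 = \varphi$, generated by the time-dependent vector field $X_t$ determined by $\tfrac{d}{dt} \varphi_t = X_t \circ \varphi_t$. Because the dimension of $M$ is at least four, each $\varphi_t$ has a spatially constant conformal factor, so that $\varphi_t^* \omega = e^{\mu_t} \omega$ for a smooth function $t \mapsto \mu_t \in \R$ with $\mu_0 = 0$. Since $\varphi_1 = \varphi$ and $\varphi_* \omega = e^\mu \omega$ forces $\varphi^* \omega = e^{-\mu} \omega$, we have $e^{\mu_1} \omega = \varphi^* \omega = e^{-\mu} \omega$, hence $\mu_1 = -\mu \not= 0$ by hypothesis.

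Next I would differentiate the relation $\varphi_t^* \omega = e^{\mu_t} \omega$ in $t$. The standard formula $\tfrac{d}{dt} \varphi_t^* \omega = \varphi_t^* \Lie_{X_t} \omega$, together with $\tfrac{d}{dt} e^{\mu_t} \omega = \dot\mu_t \, \varphi_t^* \omega$, yields, after applying $(\varphi_t^{-1})^*$, the pointwise identity $\Lie_{X_t} \omega = \dot\mu_t \, \omega$. Since $\omega$ is closed, Cartan's formula reduces this to $d (\iota_{X_t} \omega) = \dot\mu_t \, \omega$. Integrating the one-forms $\iota_{X_t} \omega$ over $t \in [0,1]$ and applying the fundamental theorem of calculus on the right gives $d \left( \int_0^1 \iota_{X_t} \omega \, dt \right) = \mu_1 \, \omega$. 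Setting $\lambda = \tfrac{1}{\mu_1} \int_0^1 \iota_{X_t} \omega \, dt$, we obtain $d \lambda = \omega$, so that $\omega$ is exact. Finally, by nondegeneracy of $\omega$ there is a unique vector field $X$ with $\iota_X \omega = \lambda$, and then $\Lie_X \omega = d (\iota_X \omega) = d \lambda = \omega$, exhibiting $X$ as the desired Liouville vector field.

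The main obstacle I anticipate lies in the first step: justifying that membership in the identity component produces a \emph{smooth} isotopy that remains inside the conformal group $\Diff_+ (M,\sigma)$ (rather than merely a continuous path, or an isotopy in the full diffeomorphism group), and that the conformal factors $\mu_t$ depend smoothly on $t$. Once one works in the appropriate Fr\'echet Lie group framework, in which $\Diff_0 (M,\sigma)$ is generated by such isotopies and the conformal factor is spatially constant for each $\varphi_t$, the remaining computation is elementary.
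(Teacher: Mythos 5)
Your proof is correct and follows essentially the same route as the paper: choose an isotopy $\varphi_t$ in the conformal group, differentiate $\varphi_t^* \omega = e^{\mu_t} \omega$ to obtain $d(\iota_{X_t} \omega) = \dot\mu_t\, \omega$, and extract a Liouville vector field. The only (cosmetic) difference is at the end, where you integrate over $t \in [0,1]$ and divide by $\mu_1 \neq 0$, whereas the paper picks a single time $t_0$ with $\dot\mu_{t_0} \neq 0$ and rescales $X_{t_0}$; both correctly flag the existence of a smooth isotopy with smoothly varying conformal factor as the only point requiring care.
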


\begin{proof}
Choose an isotopy $\varphi_t$ from $\varphi_0 = \id$ to $\varphi_1 = \varphi$ with $\varphi_t^* \omega = e^{\mu_t} \omega$.
If $X_t$ denotes its infinitesimal generator, then
	\[ \varphi_t^* (d (\iota_{X_t} \omega)) = \varphi_t^* (\Lie_{X_t} \omega) = \frac{d}{dt} (\varphi_t^* \omega) = (\frac{d}{dt} \mu_t) \cdot e^{\mu_t} \omega = \varphi_t^* ((\frac{d}{dt} \mu_t) \cdot \omega). \]
Since $\mu \not=0$, the map $t \mapsto \mu_t$ cannot be constant everywhere.
After rescaling, we obtain a Liouville vector field on $(M,\omega)$.
\end{proof}

\begin{cor}
If $\omega$ is not exact, then $\Diff_0 (M,\sigma)$ is inessential.
\end{cor}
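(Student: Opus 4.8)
The plan is to obtain the corollary as the contrapositive of the preceding lemma, once the identity-component version of the dichotomy in Lemma~\ref{lem:expand} is in place. First I would recall that, because $\dim M = 2n \ge 4$ and $\omega$ is closed, the conformal factor of any element of $\Diff_+(M,\sigma)$ is a single real constant. In particular, every $\varphi \in \Diff_0(M,\sigma)$ satisfies $\varphi^* \omega = e^\mu \omega$ for some $\mu \in \R$, and the reasoning of Lemma~\ref{lem:expand} restricts verbatim to the identity component: if some $\varphi \in \Diff_0(M,\sigma)$ has $\mu \ne 0$, then $\Diff_0(M,\sigma)$ is properly essential; otherwise every element preserves $\omega$, so $\Diff_0(M,\sigma) \subseteq \Diff(M,\omega)$ and the group is inessential.

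Next I would invoke the preceding lemma directly. Its hypothesis is precisely the existence of some $\varphi \in \Diff_0(M,\sigma)$ with nonzero conformal factor, and its conclusion is that $\omega$ is exact. Taking the contrapositive, if $\omega$ is not exact then no element of $\Diff_0(M,\sigma)$ can carry a nonzero conformal factor. By the first paragraph this places us in the inessential branch of the dichotomy, with $\tau = \omega$ fixed by the whole identity component, and the corollary follows.

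There is no real obstacle here beyond bookkeeping. The single point worth a word is the passage between the push-forward normalization $\varphi_* \omega = e^\mu \omega$ used in the lemma and the pull-back normalization $\varphi^* \omega = e^\mu \omega$ used in Lemma~\ref{lem:expand}. Since $\mu$ is a constant, the two conventions differ only by the sign of $\mu$, so the statement ``$\varphi$ has nonzero conformal factor'' is unambiguous, and the contrapositive goes through without change.
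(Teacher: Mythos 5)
Your proposal is correct and takes essentially the same route as the paper, which states the corollary without a separate proof precisely because it is the contrapositive of the preceding lemma combined with the dichotomy of Lemma~\ref{lem:expand} restricted to the identity component. Your remark on the push-forward versus pull-back normalization is accurate: for a constant conformal factor the two differ only by a sign, so the nonvanishing hypothesis is unaffected.
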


Note that the last part of the lemma as well as its corollary also follow from the action of $\Diff_+ (M,\sigma)$ on the second deRham cohomology $H^2 (M,\R)$.
More generally, for non-exact $\omega$, Lemma~\ref{lem:expand} can be restated as follows.

\begin{lem}
If the symplectic manifold $(M,\omega)$ is not exact, then $\Diff_+ (M,\sigma)$ is inessential if the action (by multiplication) of $\Diff_+ (M,\sigma)$ on $H^2 (M,\R)$ fixes the cohomology class of $\omega$, and it is properly essential otherwise.
\end{lem}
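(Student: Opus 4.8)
The plan is to recognize this lemma as a purely cohomological reformulation of Lemma~\ref{lem:expand}, its entire content being a translation of the conformal factor $\mu$ into the language of the induced action on $H^2(M,\R)$. First I would record how a conformal symplectic diffeomorphism acts on the relevant cohomology class. If $\varphi \in \Diff_+(M,\sigma)$ and $\varphi^* \omega = e^\mu \omega$ for the real constant $\mu$, then passing to de Rham cohomology gives
\[ \varphi^* [\omega] = [e^\mu \omega] = e^\mu [\omega] \]
in $H^2(M,\R)$. This is the sense in which $\Diff_+(M,\sigma)$ acts on the class $[\omega]$ by multiplication: each element scales $[\omega]$ by the factor $e^\mu$.

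The crucial point, and the only place where the non-exactness hypothesis is used, is that $[\omega] \neq 0$ in $H^2(M,\R)$ precisely because $\omega$ is not exact. Consequently $e^\mu [\omega] = [\omega]$ holds if and only if $e^\mu = 1$, that is, if and only if $\mu = 0$, so a conformal diffeomorphism $\varphi$ fixes the class $[\omega]$ exactly when its conformal factor vanishes. Passing to the whole group, the action of $\Diff_+(M,\sigma)$ fixes $[\omega]$ if and only if every conformal diffeomorphism has vanishing conformal factor, and it fails to fix $[\omega]$ if and only if there exists a conformal diffeomorphism with $\mu \neq 0$.

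The conclusion then follows at once from Lemma~\ref{lem:expand}: in the first case no conformal diffeomorphism has nonzero factor, so $\Diff_+(M,\sigma)$ is inessential, while in the second case the existence of some $\varphi$ with $\mu \neq 0$ is exactly the hypothesis making $\Diff_+(M,\sigma)$ properly essential. I do not expect any genuine obstacle here; the one thing to verify is that the stated dichotomy is honest, which is guaranteed precisely by $[\omega] \neq 0$. Were $\omega$ exact, the cohomological test would become vacuous---every element fixes the zero class regardless of $\mu$---and one would have to invoke Lemma~\ref{lem:expand} directly, as already done in the exact case treated earlier.
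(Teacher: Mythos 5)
Your proposal is correct and is exactly the argument the paper intends: the lemma is stated there without proof as a restatement of Lemma~\ref{lem:expand}, justified by the remark that the conformal factor is visible in the action on $H^2(M,\R)$ once $[\omega]\neq 0$. Your explicit verification that $\varphi^*[\omega]=e^\mu[\omega]$ is fixed iff $\mu=0$ (using non-exactness) followed by an appeal to Lemma~\ref{lem:expand} fills in precisely the details the authors left implicit.
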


\section{Orientations} \label{sec:volume}
Let $M = \R^n$ equipped with the standard volume form $\mu = dx_1 \wedge \cdots \wedge dx_n$, and $\sigma$ the induced orientation on $\R^n$.
Let $\rho \colon \R \to \R$ be a smooth function with $\rho (r) = r / n$ near $r = 0$, and define a smooth vector field $X$ on $\R^n$ by $\sum_i \rho (x_i) \partial / \partial x_i$.
Then $\Lie_X \mu = \lambda \mu$ with $\lambda = \sum_i \rho' (x_i)$, and since $\mu$ is a volume form, the flow $\varphi_t$ of $X$ satisfies $\varphi_t^* \mu = e^{f_t} \mu$, where $f_t = \int_0^t \lambda \circ \varphi_s \, ds$.
The flow $\varphi_t$ has a fixed point at the origin $0 \in \R^n$, and $f_t (0) =  t$.
Applying Lemma~\ref{lem:criterion} to the diffeomorphism $\varphi_t$ for any $t \not= 0$ proves $\Diff_+ (M,\sigma)$ is properly essential.

Now let $M^n$ be any oriented smooth manifold.
After cutting off the vector field $X$, the diffeomorphism $\varphi_1$ is supported in the unit ball $B \subset \R^n$.
Thus for any local chart $\psi \colon U \to V \subset M$ with $\overline{B} \subset U \subset \R^n$, the map $\psi \circ \varphi_1 \circ \psi^{-1}$ extends to a global diffeomorphism of $M$.
If $\mu$ is a volume form defining the orientation $\sigma$ of $M$, then $\psi^* \mu = v dx_1 \wedge \cdots \wedge dx_n$ for some smooth function $v$ on $U$.
The diffeomorphism $\psi \circ \varphi_1 \circ \psi^{-1}$ has a fixed point at $\psi (0)$, and its conformal factor at that point equals $1$.
Again by Lemma~\ref{lem:criterion}, $\Diff_+ (M,\sigma)$ is properly essential.
The proof of Theorem~\ref{thm:essential} (iii) is now complete.

\section{Banyaga's conformal invariant} \label{sec:banyaga-inv}
Let $\sigma$ be a conformal structure on a smooth manifold $M$, and $\tau \in \sigma$.
For $\varphi \in \Diff_+ (M,\sigma)$, denote by $f = f_\varphi$ the unique smooth function on $M$ given by $\varphi^* \tau = e^f \tau$.
Banyaga defined a conformal invariant of $(M,\sigma)$ by the cohomology class of the cocycle $D_\tau \colon \Diff_+ (M,\sigma) \to C^\infty (M)$, $D_\tau (\varphi) = f_{\varphi^{-1}}$, in $H^1 (\Diff_+ (M,\sigma), C^\infty (M))$.
The conformal group $\Diff_+ (M,\sigma)$ is essential if and only if this conformal invariant does not vanish \cite{banyaga:ecg00}.

We note that
	\[ D_\tau ((\varphi^{-1})^k) = \sum_{i = 0}^{k - 1} f_\varphi \circ \varphi^k. \]
The relation of Banyaga's conformal invariant to the property of being properly essential is the following.
We begin with a special case of a theorem of W.~H.~Gottschalk and G.~A.~Hedlund, which is most useful when $M$ is compact.

\begin{thm}{\cite[Theorem 14.11]{gottschalk:td55}}
Let $\varphi$ be a homeomorphism of $M$, $f$ a continuous function on $M$, and $N \subset M$ a compact minimal set under $\varphi$.
That means $N$ is closed and nonempty, and each orbit $\O (x) = {\{ \varphi^n (x) \mid n \in \Z \}}$, $x \in N$, is dense in $N$.
Then the following statements are equivalent.

(i) there exists $y_0 \in N$ such that the sequence (for $k$ a positive integer) of Birkhoff sums $\sum_{i = 0}^{k - 1} f_\varphi \circ \varphi^k (y_0)$ is bounded uniformly,

(ii) the sequence of Birkhoff sums $\sum_{i = 0}^{k - 1} f_\varphi \circ \varphi^k (y)$ is bounded uniformly in $y \in N$ and $k$ a positive integer,

(iii) there exists a continuous function $g$ on $N$ solving the cohomological equation $f (y) = g (y) - g \circ \varphi (y)$ for all $y \in N$.
\end{thm}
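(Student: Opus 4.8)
The plan is to prove the three statements equivalent by establishing the cycle (iii) $\Rightarrow$ (ii) $\Rightarrow$ (i) $\Rightarrow$ (iii), with the last implication carrying essentially all of the content. Throughout I abbreviate the Birkhoff sums by $S_k(y) = \sum_{i = 0}^{k - 1} f(\varphi^i(y))$.

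The implication (iii) $\Rightarrow$ (ii) is a telescoping computation: if $f = g - g \circ \varphi$ on $N$, then $S_k(y) = g(y) - g(\varphi^k(y))$, and since $g$ is continuous on the compact set $N$ it is bounded, so $|S_k(y)| \le 2 \sup_N |g|$ uniformly in $y \in N$ and in the positive integer $k$. The implication (ii) $\Rightarrow$ (i) is immediate.

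For (i) $\Rightarrow$ (iii) I would pass to the skew product $\Phi \colon N \times \R \to N \times \R$, $\Phi(y,s) = (\varphi(y), s - f(y))$, whose iterates are $\Phi^k(y,s) = (\varphi^k(y), s - S_k(y))$. A continuous solution $g$ of the cohomological equation corresponds exactly to a $\Phi$-invariant graph $\{(y, g(y)) : y \in N\}$, so the goal becomes to produce such an invariant graph. Hypothesis (i) says precisely that the forward orbit of $(y_0, 0)$ stays in a compact slab $N \times [-R, R]$. Its closure $K$ is compact with $\Phi(K) \subseteq K$, so $\Lambda_0 = \bigcap_{n \ge 0} \Phi^n(K)$ is a nonempty compact fully $\Phi$-invariant set, and I let $\Lambda \subseteq \Lambda_0$ be a minimal set for $\Phi$ (which exists by Zorn's lemma). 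Writing $\pi$ for the projection to $N$, the set $\pi(\Lambda)$ is compact, nonempty, and $\varphi$-invariant, hence equals $N$ by minimality of $N$.

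It remains to show that $\Lambda$ is single-valued over $N$, which I expect to be the crux of the argument. Here I would exploit the vertical translations $T_c(y,s) = (y, s + c)$, which commute with $\Phi$; consequently each $T_c(\Lambda)$ is again a minimal set, so any two of them are either equal or disjoint. The stabilizer $H = \{c \in \R : T_c(\Lambda) = \Lambda\}$ is a closed subgroup of $\R$, and it cannot contain any $c \ne 0$, for then $\Lambda$ would contain the unbounded set $\{T_c^n(p) : n \in \Z\}$, contradicting $\Lambda \subseteq N \times [-R,R]$; thus $H = \{0\}$. Now if some fiber contained two points $(y, s_1), (y, s_2) \in \Lambda$ with $c := s_2 - s_1 \ne 0$, then $(y, s_2) \in T_c(\Lambda) \cap \Lambda$ would force $T_c(\Lambda) = \Lambda$, i.e.\ $c \in H$, a contradiction. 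Hence $\Lambda$ is the graph of a function $g \colon N \to \R$; since $\Lambda$ is compact and single-valued over the compact base $N$, the function $g$ is automatically continuous, and $\Phi$-invariance of $\Lambda$ reads $g(\varphi(y)) = g(y) - f(y)$, that is, $f = g - g \circ \varphi$ on $N$, as required. The delicate points to watch are the full (two-sided) invariance of $\Lambda$ and the triviality of the translation stabilizer $H$, since these are exactly what rule out the degenerate ``thick'' invariant sets and force a genuine graph.
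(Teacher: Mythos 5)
Your proof is correct, and it is the classical skew-product argument for the Gottschalk--Hedlund theorem: lift to $\Phi(y,s)=(\varphi(y),\,s-f(y))$ on $N\times\R$, use hypothesis (i) to trap a forward orbit in a compact slab, extract a minimal set $\Lambda$ projecting onto $N$, and use the vertical translations commuting with $\Phi$ to show $\Lambda$ is a graph. All the steps check out, including the two points you flag as delicate: full invariance of $\Lambda=\bigcap_{n\ge 0}\Phi^n(K)$ follows from the nesting $\Phi^{n+1}(K)\subseteq\Phi^n(K)$, and the triviality of the stabilizer $H$ follows from compactness of $\Lambda$ in the fiber direction. The only point of comparison to make is that the paper does not prove this statement at all: it is quoted verbatim from Gottschalk and Hedlund \cite[Theorem 14.11]{gottschalk:td55}, and the authors merely remark that (iii) $\Rightarrow$ (ii) $\Rightarrow$ (i) is trivial, that the content is (i) $\Rightarrow$ (iii), and that the solution $g$ is unique up to an additive constant by density of orbits. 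So you have supplied a complete, self-contained proof where the paper relies on a citation; what you wrote is essentially the textbook proof of the cited result, not a new route. One cosmetic remark: the summand in the paper's Birkhoff sums is printed as $f_\varphi\circ\varphi^k$, which is a typo for $f\circ\varphi^i$; your reading $S_k(y)=\sum_{i=0}^{k-1}f(\varphi^i(y))$ is the intended one.
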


The implications $(iii) \Rightarrow (ii)$ and $(ii) \Rightarrow (i)$ are trivial, and the content of the theorem is the assertion that $(i) \Rightarrow (iii)$.
The function $g$ is unique up to an additive constant.
Indeed, if $g_1$ and $g_2$ are continuous functions such that $g_1 (y) - g_1 \circ \varphi (y) = f (y) = g_2 (y) - g_2 \circ \varphi (y)$, then $(g_1 - g_2) (y) = (g_1 - g_2) (\varphi (y))$ is constant on each orbit.
Since the orbits are dense and $g_1$ and $g_2$ are continuous, they coincide up to a constant.
The axiom of choice guarantees the existence of at least one compact minimal set $N$ \cite[Appendix]{gottschalk:td55}.
If $\varphi$ is minimal, that is, every orbit is dense, and $M$ is compact, then one can choose $N = M$ in the theorem.
Without the assumption of compactness of $N$, it is only true that boundedness of the sequence of partial sums $\sum_{i = 0}^{k - 1} f_\varphi \circ \varphi^k (y)$ implies that $f$ is a coboundary, i.e.\ there exists a continuous function $g$ on $N$ that satisfies the relation $f = g - g \circ \varphi$, but not the converse \cite{browder:itn58,mccutcheon:ght99}.

\begin{cor}
Suppose $\varphi \in \Diff_+ (M,\sigma)$, and there exists a compact minimal set $N \subset M$, such that for some $x \in N$ the sequence $D_\tau ((\varphi^{-1})^k) (x)$ is unbounded.
Then $\varphi \notin \Diff (M,\tau')$ for any $\tau' \in \sigma$, and consequently, $\Diff_+ (M,\sigma)$ is properly essential.
In particular, if $x \in M$ is a periodic point of $\varphi$, that is $\varphi^m (x) = x$ for some integer $m \ge 1$, and $D_\tau ((\varphi^{-1})^m) (x) \not= 0$, then $\varphi \notin \Diff (M,\tau')$ for any $\tau' \in \sigma$, and $\Diff_+ (M,\sigma)$ is properly essential.
\end{cor}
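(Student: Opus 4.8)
The plan is to read the hypothesis as an obstruction to the cohomological equation (\ref{eqn:criterion}) and then invoke only the trivial direction of the quoted Gottschalk--Hedlund theorem. First I would recall that, by the discussion preceding Lemma~\ref{lem:criterion}, a diffeomorphism $\varphi$ with $\varphi^* \tau = e^f \tau$ preserves a tensor field $\tau' = e^g \tau \in \sigma$ if and only if $g \in C_{\ad,+}^\infty (M,\tau)$ solves $f = g - g \circ \varphi$ on all of $M$. Next I would identify the quantity in the hypothesis with a Birkhoff sum: iterating $\varphi^* \tau = e^f \tau$ gives $(\varphi^k)^* \tau = e^{\sum_{i = 0}^{k - 1} f \circ \varphi^i} \tau$, so that $D_\tau ((\varphi^{-1})^k) = \sum_{i = 0}^{k - 1} f \circ \varphi^i$ is precisely the $k$-th Birkhoff sum of $f$ along the $\varphi$-orbit.

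I would then argue by contradiction. Suppose $\varphi \in \Diff (M,\tau')$ for some $\tau' = e^g \tau \in \sigma$. Then $g$ is a smooth global solution of $f = g - g \circ \varphi$, and in particular its restriction $g|_N$ is a continuous solution on the compact minimal set $N$. Telescoping,
\[ D_\tau ((\varphi^{-1})^k) = \sum_{i = 0}^{k - 1} (g \circ \varphi^i - g \circ \varphi^{i + 1}) = g - g \circ \varphi^k \quad \text{on } N. \]
Since $N$ is compact and $g$ is continuous, the right-hand side is bounded by $2 \max_N |g|$ uniformly in $k$, so in particular the sequence $D_\tau ((\varphi^{-1})^k) (x)$ is bounded, contradicting the hypothesis. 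This is exactly the trivial implication $(iii) \Rightarrow (i)$ of the quoted theorem, so the hard direction is not needed here. Hence $\varphi$ preserves no $\tau' \in \sigma$, and as in the proof of Lemma~\ref{lem:criterion}, $\varphi$ lies in the right-hand side of (\ref{eqn:essential}) but not the left, so $\Diff_+ (M,\sigma)$ is properly essential.

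For the periodic case I would exhibit a compact minimal set on which the first part applies. If $\varphi^m (x) = x$, then the finite orbit $N = \{ \varphi^j (x) \mid j \in \Z \} = \{ x, \varphi (x), \ldots, \varphi^{m - 1} (x) \}$ is closed, nonempty, and minimal, since every $\varphi$-orbit inside it is all of $N$. Grouping the Birkhoff sum into blocks of length $m$ and using $\varphi^{j m} (x) = x$ gives $D_\tau ((\varphi^{-1})^{k m}) (x) = k \cdot D_\tau ((\varphi^{-1})^m) (x)$; if $D_\tau ((\varphi^{-1})^m) (x) \neq 0$ this grows linearly and is therefore unbounded, so the first part applies with this choice of $N$ and $x$.

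I expect no serious obstacle: the essential content is the telescoping identity together with the compactness of $N$. The only points requiring care are matching $D_\tau ((\varphi^{-1})^k)$ correctly with the Birkhoff sums $\sum_{i = 0}^{k - 1} f \circ \varphi^i$ of $f$, and verifying that a periodic orbit is a legitimate compact minimal set on which these partial sums grow linearly.
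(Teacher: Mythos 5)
Your proof is correct and follows essentially the same route as the paper: assume a global solution $g$ of the cohomological equation exists, deduce boundedness of the Birkhoff sums $D_\tau ((\varphi^{-1})^k)(x)$ on the compact minimal set, and contradict the hypothesis; for the periodic case, observe that the finite orbit is a compact minimal set on which the sums grow linearly. The only cosmetic difference is that you inline the trivial implication of the Gottschalk--Hedlund theorem via the telescoping identity (and correctly write the Birkhoff sum as $\sum_{i=0}^{k-1} f \circ \varphi^i$, fixing a typo in the paper's displayed formula), whereas the paper simply cites the theorem.
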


The last part of the corollary in the case $m = 1$ is precisely Lemma~\ref{lem:criterion}.

\begin{proof}
Assume $\varphi \in \Diff (M,\tau')$, where $\tau' = e^g \tau$ for a smooth function $g$ on $M$.
Then by equation (\ref{eqn:criterion}), $f = g - g \circ \varphi$ on $M$.
By the theorem, the sequence $D_\tau ((\varphi^{-1})^k) (x)$ is bounded, a contradiction.
To prove the last statement, note that the orbit $\O (x)$ of $x$ is a compact minimal set, and the sequence $D_\tau ((\varphi^{-1})^k) (x)$ is bounded if and only if $D_\tau ((\varphi^{-1})^m) (x) = 0$.
\end{proof}

It would be interesting to have a necessary and sufficient condition for the proper essentiality of the conformal group $\Diff_+ (M,\sigma)$ in terms of Banyaga's conformal invariant, such as a smooth and global version of the Gottschalk-Hedlund Theorem, where $\varphi$ is a diffeomorphism, $f$ is a smooth function, and $g$ is also smooth and satisfies $f = g - g \circ \varphi$ on all of $M$.
The question of existence and regularity of solutions to various flavors of cohomological equations, such as Liv\v{s}ic theory or Liv\v{s}ic regularity, and other bounded implies coboundary type results, is studied in a myriad of papers in the literature.

We remark however that if $g$ is a function on $M$ that satisfies $f = g - g \circ \varphi$ on all of $M$, smoothness of $f$ alone is not enough to guarantee the  smoothness of $g$.
Indeed, for an irrational number $\theta$, choose a sequence of integers $n_k \ge 2^k$,  $k \ge 1$, such that $0 < n_k \theta - [ n_k \theta ] \le 2^{- n_k}$, where $[x]$ as usual denotes the greatest integer less than or equal to $x$, and define $n_k = - n_{- k}$ for $k < 0$.
Then the (real) function $f \colon S^1 \to \R$ defined by
	\[ f (e^{2 \pi i t}) = \sum_{j \not= 0} \frac{1}{j^2} \left( 1 - e^{2 \pi i n_j \theta} \right) \, e^{2 \pi i n_j t}, \]
is smooth, and the (real) function $g \colon S^1 \to \R$ defined by
	\[ g (e^{2 \pi i t}) = \sum_{j \not= 0} \frac{1}{j^2} e^{2 \pi i n_j t}, \]
is only $C^0$ but not $C^1$.
In fact, the example can easily be modified so that the function $f$ is smooth, and $g$ is $C^k$ but not $C^{k +1}$ for any $k \ge 0$, or even $L^2$ but not $C^0$.
On the other hand, it is immediate to check that $g (x) - g (e^{2 \pi i \theta} x) = f (x)$.
See \cite{furstenberg:set61, mueller:hvf11} for details.

\section{Diffeomorphisms reversing the orientation of a conformal structure} \label{sec:orientation}
In this section, we allow the function $f$ in the equation $\varphi^* \tau = f \tau$ to be any smooth function that vanishes nowhere, and drop the subscript $+$ wherever applicable.
For example, we define
	\[ C_\ad^\infty (M,\tau) = \{ f \in C^\infty (M) \mid f \tau \in \sigma \} \]
the set of nowhere vanishing functions that can appear as conformal rescaling factors of $\tau$.
Again if $\tau' = g \tau$, there is a natural $1-1$ correspondence between $C_\ad^\infty (M,\tau)$ and $C_\ad^\infty (M,\tau')$ given by $f \mapsto f / g$.
For simplicity, we assume $M$ is connected, so that $f$ is either everywhere positive (as in the rest of the paper) or everywhere negative.
If $\tau$ is a contact form or a volume form, then $C_\ad^\infty (M,\tau)$ consists of all nowhere zero functions, whereas if $\tau$ is a symplectic form and $\dim M \ge 4$, then by the closedness condition, $C_\ad^\infty (M,\tau)$ is comprised of nonzero constant functions only.
Similarly we define $\Diff (M,\sigma) \supset \Diff_+ (M,\sigma)$ the automorphism group of the \emph{unoriented} conformal structure $\sigma$ by
	\[ \Diff (M,\sigma) = \{ \varphi \in \Diff (M) \mid \varphi^* \tau = f \tau \ {\rm for \ some} \ f \in C_\ad^\infty (M,\tau) \}. \]
From now on, assume $\sigma$ is one of the classical conformal structures.
We note that if $\tau \in \sigma$, then $- \tau \in \sigma$, and $\Diff (M,\tau) = \Diff (M,- \tau)$.
Thus $\Diff (M,\sigma)$ is essential or properly essential, respectively, whenever $\Diff_+ (M,\sigma)$ is.

Let $\tau \in \sigma$, and $\varphi \in \Diff (M,\sigma)$ such that $\varphi^* \tau = f \tau$ for some everywhere positive or negative function $f$ on $M$.
If $\tau' = g \tau \in \sigma$ is any other tensor field in the same (unoriented) conformal class, then $\varphi^* (g \tau) = (g \circ \varphi) f \tau$, and thus $\varphi$ preserves the tensor field $\tau'$ if and only if $(g \circ \varphi) f = g$.
Since $g \circ \varphi$ everywhere on $M$ has the same sign as $g$, we have the following result.

\begin{lem}
If there exists a conformal diffeomorphism $\varphi \in \Diff (M,\sigma)$ such that $\varphi^* \tau = f \tau$ for some $\tau \in \sigma$ and a \emph{negative} function $f$, then $\Diff (M,\sigma)$ is properly essential.
Otherwise $\Diff (M,\sigma)$ is essential or properly essential, respectively, if and only if $\Diff_+ (M,\sigma)$ is.
\end{lem}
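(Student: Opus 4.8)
The plan is to split the dichotomy into its two halves, each of which reduces to an elementary sign consideration made available by the connectedness of $M$. Since the paragraph preceding the lemma has already set up the relevant criterion, the first half is almost immediate. Suppose $\varphi \in \Diff (M,\sigma)$ satisfies $\varphi^* \tau = f \tau$ for some $\tau \in \sigma$ and a negative function $f$. I would argue directly that $\varphi$ preserves no tensor field in $\sigma$: by the discussion above, $\varphi$ preserves $\tau' = g \tau \in \sigma$ if and only if $(g \circ \varphi)\, f = g$, where $g \in C_\ad^\infty (M,\tau)$ is nowhere vanishing. Because $M$ is connected, $g$ has constant sign, whence $g \circ \varphi$ has the same constant sign as $g$ and $(g \circ \varphi)/g > 0$ everywhere. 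Rearranging the preservation equation gives $f = g/(g \circ \varphi)$, whose right-hand side is everywhere positive, contradicting $f < 0$. Thus no such $g$ exists, so $\varphi$ lies in no $\Diff (M,\tau')$; this places $\varphi$ inside $\Diff (M,\sigma)$ but outside the left-hand union of (\ref{eqn:essential}), proving $\Diff (M,\sigma)$ properly essential.

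For the second half, I would first observe that the hypothesis ``otherwise'' says precisely that no $\varphi \in \Diff (M,\sigma)$ has a negative conformal factor. Fixing $\tau \in \sigma$, every $\varphi \in \Diff (M,\sigma)$ then satisfies $\varphi^* \tau = f \tau$ with $f > 0$, so $f = e^h$ for $h \in C_{\ad,+}^\infty (M,\tau)$ and hence $\varphi \in \Diff_+ (M,\sigma)$; as the reverse inclusion is automatic, $\Diff (M,\sigma) = \Diff_+ (M,\sigma)$ as subgroups of $\Diff (M)$. It then remains to check that the left-hand union in (\ref{eqn:essential}) is the same whether $\sigma$ is read as oriented or unoriented. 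Here I would invoke the already-noted identity $\Diff (M,\tau') = \Diff (M,- \tau')$: each unoriented representative has the form $g \tau$ with $g$ of constant sign, and $\Diff (M,g \tau) = \Diff (M,|g| \tau)$, where $|g| \tau = e^{\log |g|} \tau$ is a positive rescaling, so the union over all of $\sigma$ collapses to the union over positive rescalings alone. With both the ambient group and the union unchanged, $\Diff (M,\sigma)$ is inessential (respectively properly essential) exactly when $\Diff_+ (M,\sigma)$ is.

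The argument carries no serious obstacle; it is in essence bookkeeping powered by the sign invariance that connectedness supplies. The one point demanding care is the verification in the second half that the two unions $\bigcup_{\tau' \in \sigma} \Diff (M,\tau')$ genuinely coincide, since a priori the \emph{unoriented} conformal class is a strictly larger set of representatives than its oriented counterpart. The identity $\Diff (M,\tau') = \Diff (M,- \tau')$, combined with the fact that every admissible rescaling factor has constant sign on connected $M$, is exactly what reconciles this apparent discrepancy and makes the ``if and only if'' go through in both the essential and the properly essential cases.
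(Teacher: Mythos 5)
Your proof is correct and follows essentially the same route as the paper, which leaves the argument implicit in the discussion surrounding the lemma: the preservation criterion $(g \circ \varphi) f = g$ forces $f = g/(g \circ \varphi) > 0$ by the constant sign of $g$ on connected $M$, and the identity $\Diff (M,\tau') = \Diff (M,-\tau')$ collapses the unoriented union to the oriented one. Your explicit verification that the two unions in the ``otherwise'' case coincide is a careful spelling-out of what the paper takes for granted.
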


For the automorphism group of an unoriented conformal symplectic structure on a compact manifold $M^{2 n}$ with $n > 1$, this lemma can be reformulated as follows.

\begin{lem}
The group of (not necessarily orientation-preserving) conformal symplectic diffeomorphisms of a compact symplectic manifold $(M,\omega)$ of dimension $2 n \ge 4$ consists of all (anti-)symplectic diffeomorphisms $\varphi$, that is, $\varphi^* \omega = \omega$ or $- \omega$.
It is properly essential, and in particular essential, if and only if $(M,\omega)$ admits a symplectic involution $\varphi$, i.e.\ $\varphi^* \omega = - \omega$.
\end{lem}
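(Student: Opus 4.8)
The plan is to treat the two assertions in turn: first to identify $\Diff(M,\sigma)$ explicitly as the group of (anti-)symplectic diffeomorphisms, and then to read off the essentiality dichotomy from the preceding lemma. For the first assertion I would start from the observation recorded above that, because $\dim M = 2n \ge 4$ and $\omega$ is closed, $C_\ad^\infty(M,\omega)$ consists only of nonzero constants. Hence any $\varphi \in \Diff(M,\sigma)$ satisfies $\varphi^*\omega = c\,\omega$ for some real constant $c \ne 0$, and therefore $\varphi^*(\omega^n) = c^n\,\omega^n$. Since $\omega^n$ is a nowhere vanishing top-degree form on the compact manifold $M$, its integral is nonzero, while the change of variables formula forces $\int_M \varphi^*(\omega^n) = \pm \int_M \omega^n$, the sign being the degree of $\varphi$. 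Comparing the two expressions gives $c^n = \pm 1$, and since $c$ is real this leaves only $c = 1$ or $c = -1$; thus every conformal symplectic diffeomorphism satisfies $\varphi^*\omega = \omega$ or $\varphi^*\omega = -\omega$, as claimed.

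For the second assertion I would run a short cycle of implications. If $(M,\omega)$ admits a symplectic involution $\varphi$, so that $\varphi^*\omega = -\omega = f\,\omega$ with the negative constant $f \equiv -1$, then the preceding lemma applies verbatim and shows $\Diff(M,\sigma)$ is properly essential; since proper essentiality entails essentiality, the ``in particular essential'' clause follows at once. Conversely, suppose no symplectic involution exists. By the first assertion every $\varphi \in \Diff(M,\sigma)$ then satisfies $\varphi^*\omega = \omega$, so $\Diff(M,\sigma) = \Diff(M,\omega)$, and taking $\tau = \omega$ in the definition of inessentiality shows $\Diff(M,\sigma)$ is inessential, hence neither essential nor properly essential. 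Read contrapositively, essentiality of $\Diff(M,\sigma)$ implies the existence of a symplectic involution. Chaining the three implications --- admits involution $\Rightarrow$ properly essential $\Rightarrow$ essential $\Rightarrow$ admits involution --- shows that the three properties are equivalent, which is exactly the asserted biconditional together with its parenthetical strengthening.

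I do not expect a genuine obstacle here, as the lemma is in essence a repackaging of results already established. The only point demanding care is the sign bookkeeping in the first step: one must allow $c^n = -1$, which occurs precisely when $n$ is odd and $\varphi$ reverses the symplectic orientation, and then check in each parity that a real $n$th root of $\pm 1$ can only be $\pm 1$. If one prefers to avoid the degree of $\varphi$ altogether, the same conclusion follows by noting that a diffeomorphism preserves the total mass $\int_M |\omega^n|$, whence $|c|^n = 1$ and $c = \pm 1$ directly. It is worth emphasizing that the force of the statement comes entirely from the orientation-reversing elements: the orientation-preserving conformal group $\Diff_+(M,\sigma)$ is inessential on a compact symplectic manifold, so anti-symplectic diffeomorphisms are the sole possible source of (proper) essentiality.
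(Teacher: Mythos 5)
Your proof is correct and follows exactly the route the paper intends: the paper states this lemma without a separate proof, as a reformulation obtained by combining the closedness/constancy of conformal factors, the change-of-variables argument for compact $M$ from Section~\ref{sec:symplectic} (which forces $c=\pm 1$ and makes $\Diff_+(M,\sigma)$ inessential), and the immediately preceding lemma on negative conformal factors. Your write-up just makes these steps explicit, with the sign bookkeeping handled correctly.
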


This holds for example for $\C P^n$ with the Fubini-Study symplectic form $\omega_{\rm FS}$, since the reflection in $\R P^n \subset \C P^n$, $z \mapsto \overline{z}$, reverses the sign of $\omega_{\rm FS}$.
This also reverses the sign of the induced volume form $\omega_{\rm FS}^n$ if and only if $n$ is odd.
For $T^{2 n}$ with any symplectic structure $\omega = \sum_i c_i dx_i \wedge dy_i$ with constant coefficients (e.g.\ the standard symplectic structure), the diffeomorphism $(x, y) \mapsto (y, x)$ reverses the sign of $\omega$.

\section{Some final remarks}

\begin{lem}
The subset 
	\[ \bigcup_{\tau \in \sigma} \Diff (M,\tau) \subset \Diff_+ (M,\sigma) \]
is closed under inverses and conjugation by conformal diffeomorphisms in $\Diff_+ (M,\sigma)$.
Thus this subset forms a normal subgroup of $\Diff_+ (M,\sigma)$ if and only if it is closed under composition.
\end{lem}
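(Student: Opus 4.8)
Write $S = \bigcup_{\tau \in \sigma} \Diff(M,\tau)$ for the subset in question. The whole lemma rests on the functoriality of pullback, $(\varphi \circ \psi)^* = \psi^* \circ \varphi^*$, together with the fact that every $\psi \in \Diff_+(M,\sigma)$ preserves $\sigma$ and hence carries each representative $\tau \in \sigma$ to another representative of the same conformal class. The plan is to establish closure under inverses and under conjugation as two short computations, and then to assemble these with the trivial membership $\id \in S$ into the claimed equivalence. Note first that $\Diff(M,\tau) \subset \Diff_+(M,\sigma)$ for every $\tau \in \sigma$, so indeed $S \subset \Diff_+(M,\sigma)$.

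For closure under inverses, I would take $\varphi \in S$, pick $\tau \in \sigma$ with $\varphi^* \tau = \tau$, and apply $(\varphi^{-1})^*$ to both sides. Since $(\varphi^{-1})^* \circ \varphi^* = (\varphi \circ \varphi^{-1})^* = \id$, this gives $(\varphi^{-1})^* \tau = \tau$, so $\varphi^{-1} \in \Diff(M,\tau) \subset S$.

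For closure under conjugation, I would fix $\varphi \in S$ with $\varphi^* \tau = \tau$ for some $\tau \in \sigma$, together with an arbitrary $\psi \in \Diff_+(M,\sigma)$. The key choice is the representative $\tau' = (\psi^{-1})^* \tau$; since $\psi^{-1}$ again lies in $\Diff_+(M,\sigma)$ and therefore preserves $\sigma$, we have $\tau' \in \sigma$. A direct computation using functoriality then yields
\[ (\psi \circ \varphi \circ \psi^{-1})^* \tau' = (\psi^{-1})^* \varphi^* \psi^* (\psi^{-1})^* \tau = (\psi^{-1})^* \varphi^* \tau = (\psi^{-1})^* \tau = \tau', \]
where the second equality uses $\psi^* (\psi^{-1})^* = (\psi^{-1} \circ \psi)^* = \id$ and the third uses $\varphi^* \tau = \tau$. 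Hence $\psi \circ \varphi \circ \psi^{-1} \in \Diff(M,\tau') \subset S$, and conjugation by $\psi$ sends $\Diff(M,\tau)$ into $\Diff(M,(\psi^{-1})^*\tau)$.

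Finally I would package these facts. As $\id \in S$ and $S$ is closed under inverses, $S$ is a subgroup of $\Diff_+(M,\sigma)$ exactly when it is closed under composition; and since $S$ is always closed under conjugation by the ambient group, any such subgroup is automatically normal, while conversely a normal subgroup is in particular closed under composition. This is precisely the stated equivalence. I do not expect a genuine obstacle here: the only points requiring care are the order of the pullbacks and the correct choice of the representative $\tau'$ in the conjugation step. The substantive content of the statement is rather conceptual, namely that closure under composition is the one group axiom not supplied for free by the preceding computations, and it is indeed the condition that may fail in general.
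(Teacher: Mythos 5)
Your proposal is correct and follows essentially the same route as the paper: the paper's proof likewise observes that conjugating an automorphism of $\tau$ by $\psi \in \Diff_+(M,\sigma)$ yields an automorphism of the pulled-back representative (the paper writes the conjugation as $\psi^{-1} \circ \varphi \circ \psi$ preserving $\psi^*\tau = e^f\tau$, the mirror image of your computation), and dismisses the remaining closure properties and the packaging as obvious. Your write-up merely makes those ``obvious'' steps explicit; there is no substantive difference.
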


\begin{proof}
Suppose $\tau \in \sigma$, $\varphi \in \Diff (M,\tau)$, and $\psi \in \Diff_+ (M,\sigma)$.
Then $\varphi^* \tau = \tau$, and $\psi^* \tau = e^f \tau$ for some smooth function $f$ on $M$.
$\psi^{-1} \circ \varphi \circ \psi$ preserves the tensor field $e^f \tau \in \sigma$.
The remaining statements are obvious.
\end{proof}

\begin{cor}
Suppose the conformal group $\Diff_+ (M,\sigma)$ is properly essential.
Then if the set $\cup_{\tau \in \sigma} \Diff (M,\tau)$ is closed under composition, $\Diff_+ (M,\sigma)$ is not a simple group.
\end{cor}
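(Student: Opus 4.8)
The plan is to produce a proper nontrivial normal subgroup of $\Diff_+ (M,\sigma)$, which is by definition exactly what it means for a group to fail to be simple; the natural candidate is the set $S = \bigcup_{\tau \in \sigma} \Diff (M,\tau)$ itself.

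First I would invoke the immediately preceding lemma, which records that $S$ is always closed under inverses and under conjugation by elements of $\Diff_+ (M,\sigma)$, and that $S$ is a normal subgroup exactly when it is closed under composition. Since closure under composition is precisely the standing hypothesis of the corollary, this gives at once that $S$ is a normal subgroup of $\Diff_+ (M,\sigma)$.

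Next I would verify that $S$ is proper. This is built into the notion of proper essentiality: by equation (\ref{eqn:essential}), the assumption that $\Diff_+ (M,\sigma)$ is properly essential says exactly that $S \subsetneq \Diff_+ (M,\sigma)$. Finally I would observe that $S$ is nontrivial, since $\Diff (M,\tau) \subset S$ for every $\tau \in \sigma$, and for each of the classical conformal structures this automorphism group is far larger than $\{ \id \}$ (strict contactomorphisms, symplectomorphisms, or volume-preserving diffeomorphisms, respectively). Hence $\{ \id \} \neq S \neq \Diff_+ (M,\sigma)$, exhibiting a proper nontrivial normal subgroup and showing that $\Diff_+ (M,\sigma)$ is not simple.

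There is no real obstacle here, as every ingredient is already in hand: normality comes from the preceding lemma together with the closure hypothesis, properness is a restatement of proper essentiality, and the notion of simplicity requires only a single proper nontrivial normal subgroup. The one point deserving explicit mention, rather than silent assumption, is the nontriviality of $S$; I would confirm it from the concreteness of the classical automorphism groups instead of leaving it implicit.
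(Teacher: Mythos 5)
Your proposal is correct and follows exactly the route the paper intends: the corollary is stated without proof as an immediate consequence of the preceding lemma (normality from closure under composition plus conjugation-invariance, properness from the definition of properly essential). Your explicit check that $S = \bigcup_{\tau \in \sigma} \Diff(M,\tau)$ is nontrivial is a point the paper leaves implicit, and is a reasonable addition for the classical conformal structures under consideration.
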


The same argument applies to the automorphism groups of unoriented conformal structures, or the identity components of any of the above groups.
If $M$ is a closed and connected contact manifold, then $\Diff_0 (M,\xi)$ is a simple group according to T.~Rybicki~\cite{rybicki:cc10}, and thus the union of $\Diff_0 (M,\alpha)$ over all contact forms $\alpha$ with $\ker \alpha = \xi$ can never form a group.
On the other hand, the group generated by the latter equals $\Diff_0 (M,\xi)$ by the same argument, or more precisely, any contact diffeomorphism in $\Diff_0 (M,\xi)$ can be written as a finite composition of diffeomorphisms, each of which preserves some contact form (that varies for different factors) with kernel $\xi$.
In other words, if $\varphi^* \alpha = e^f \alpha$ for a contact diffeomorphism $\varphi$, isotopic to the identity in the group of contact diffeomorphisms, then ${\varphi = \varphi_k \circ \cdots \circ \varphi_1}$  for some integer $k$, smooth functions $f_i$, and contact diffeomorphisms $\varphi_i$ preserving the contact forms $e^{f_i} \alpha$ (and isotopic to the identity in the group of diffeomorphisms preserving $e^{f_i} \alpha$), and $f$ can be written
	\[ f = \sum_{i = 1}^k (f_i - f_i \circ \varphi_i) \circ \varphi_{i - 1} \circ \cdots \circ \varphi_1. \]
The next statement follows immediately from the preceding discussion.

\begin{pro}
The minimal number of factors in the above decomposition of a contact diffeomorphism is a contact invariant (it is conjugation invariant), and this invariant is nontrivial on $\Diff_0 (M,\xi)$.
\end{pro}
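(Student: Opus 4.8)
The plan is to define the invariant explicitly and then read both assertions off results already established. Write $S \subset \Diff_0 (M,\xi)$ for the union $\bigcup_\beta \Diff_0 (M,\beta)$ over all contact forms $\beta$ with $\ker \beta = \xi$, and for $\varphi \in \Diff_0 (M,\xi)$ let $\nu (\varphi)$ denote the least integer $k$ for which $\varphi = \varphi_k \circ \cdots \circ \varphi_1$ with every factor $\varphi_i \in S$. The preceding discussion shows that $S$ generates $\Diff_0 (M,\xi)$, so $\nu$ is a well-defined function with values in the nonnegative integers; here $\nu (\varphi) = 0$ exactly when $\varphi = \id$, and $\nu (\varphi) = 1$ exactly when $\varphi$ itself preserves some contact form with kernel $\xi$. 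Since $S$ is built from $\xi$ alone, with no reference to a distinguished contact form, $\nu$ is manifestly an invariant of the contact structure.

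For the conjugation invariance, fix $\psi \in \Diff_0 (M,\xi)$. By the lemma of this section asserting that $\bigcup_\tau \Diff (M,\tau)$ is closed under conjugation by conformal diffeomorphisms (and the remark that the same holds for identity components), the map $\varphi_i \mapsto \psi \circ \varphi_i \circ \psi^{-1}$ carries $S$ into $S$; applying this to $\psi^{-1}$ shows it is in fact a bijection of $S$ onto itself. I would record the identity-component refinement explicitly: if $\varphi_{i,t}$ is an isotopy inside $\Diff (M,\beta_i)$ from $\id$ to $\varphi_i$, then $\psi \circ \varphi_{i,t} \circ \psi^{-1}$ is an isotopy inside $\Diff (M,(\psi^{-1})^* \beta_i)$, so conjugation respects the identity components. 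Hence a minimal factorization $\varphi = \varphi_k \circ \cdots \circ \varphi_1$ conjugates termwise into a length-$k$ factorization of $\psi \circ \varphi \circ \psi^{-1}$ by elements of $S$, giving $\nu (\psi \circ \varphi \circ \psi^{-1}) \le \nu (\varphi)$; conjugating back by $\psi^{-1}$ yields the reverse inequality, and therefore equality.

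For nontriviality I would argue by contradiction. If $\nu (\varphi) \le 1$ for every $\varphi \in \Diff_0 (M,\xi)$, then $S = \Diff_0 (M,\xi)$, so in particular $S$ would be closed under composition. This contradicts the observation made above that, since $\Diff_+ (M,\xi)$ is properly essential by Theorem~\ref{thm:essential}(i) while $\Diff_0 (M,\xi)$ is simple by Rybicki's theorem \cite{rybicki:cc10}, the union $S$ can never form a group. Hence $\nu$ attains a value at least $2$, which is the asserted nontriviality.

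As the proposition is essentially a repackaging of the preceding results, I do not expect a serious obstacle. The two points requiring care are the identity-component bookkeeping in the conjugation step and, more importantly, the recognition that nontriviality is exactly the failure of $S$ to be closed under composition; it is in this last equivalence that the genuine input enters, namely Rybicki's simplicity theorem together with proper essentiality, both already in hand.
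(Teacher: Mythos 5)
Your proof is correct and follows essentially the same route as the paper, which simply asserts that the proposition ``follows immediately from the preceding discussion'' --- namely the conjugation lemma of this section, Rybicki's simplicity theorem, and proper essentiality. One minor simplification: for nontriviality the detour through Rybicki is unnecessary, since the identity-component version of Theorem~\ref{thm:essential}(i) already produces a $\varphi \in \Diff_0 (M,\xi)$ preserving no contact form with kernel $\xi$, hence with $\nu (\varphi) \ge 2$ directly.
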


\section*{Acknowledgements}
We would like to thank Urs Frauenfelder for suggesting the use of open books for the proof of Lemma~\ref{lem:closed-orbit}, and Otto van Koert and Klaus Niederkr\"uger for useful discussions related to the proof of Lemma~\ref{lem:standard}.

\bibliography{essential}
\bibliographystyle{amsplain}
\end{document}